\setlist[description]{leftmargin=3.5mm}
\numberwithin{theorem}{section}
\newcommand{\rottext}[1]{\rotatebox{90}{\hbox to 20mm{\hss #1\hss}}}
\newcommand\norm[1]{\left\lVert#1\right\rVert}
\newcommand{\R}{\mathbb{R}}
\renewcommand{\norm}[2][]{\left\Vert#2\right\Vert_{#1}}
\newcommand{\kron}{\otimes}
\DeclareMathOperator*{\argmin}{arg\,min}                   % arg min
\renewcommand{\t} {^{\top}}                                % transpose, e.g. $A\t$
\newcommand{\diag}{\mathrm{diag}}
\renewcommand{\phi}{\mathbf{\varphi}}
\newcommand{\bfzero}{{\bf0}}
\newcommand{\bfd}{\mathbf{d}}
\newcommand{\bfT}{\mathbf{T}}
\newcommand{\bff}{\mathbf{f}}
\newcommand{\bfA}{\mathbf{A}}
\newcommand{\bfC}{\mathbf{C}}
\newcommand{\bfE}{\mathbf{E}}
\newcommand{\bfU}{\mathbf{U}}
\newcommand{\bfP}{\mathbf{P}}
\newcommand{\bfR}{\mathbf{R}}
\newcommand{\bfI}{\mathbf{I}}
\newcommand{\bfb}{\mathbf{b}}
\newcommand{\bfc}{\mathbf{c}}
\newcommand{\bfX}{\mathbf{X}}
\newcommand{\bfe}{\mathbf{e}}
\newcommand{\bfM}{\mathbf{M}}
\newcommand{\bfL}{\mathbf{L}}
\newcommand{\bfw}{\mathbf{w}}
\newcommand{\bfp}{\mathbf{p}}
\newcommand{\bfh}{\mathbf{h}}
\newcommand{\bfr}{\mathbf{r}}
\newcommand{\bfs}{\mathbf{s}}
\newcommand{\bfv}{\mathbf{v}}
\newcommand{\bfV}{\mathbf{V}}
\newcommand{\bfQ}{\mathbf{Q}}
\newcommand{\bfJ}{\mathbf{J}}
\newcommand{\bfS}{\mathbf{S}}
\newcommand{\bfSigma}{\mathbf{\Sigma}}
\newcommand{\x}{x}														% vector in R^d
\renewcommand{\xi}[1]{\x_{#1}}								% i-th component of point x
\newcommand{\bfK}{\mathbf{K}}
\newcommand{\trace}{{\mathop{\mathrm{tr}}}}
\newcommand{\calI}{\mathcal{I}}
\newcommand{\calJ}{\mathcal{J}}
\newcommand{\calR}{\mathcal{R}}
\newcommand{\bfGamma}{{\boldsymbol{\Gamma}}}
\newcommand{\bfLambda}{{\boldsymbol{\Lambda}}}
\newcommand{\bfPi}{{\boldsymbol{\Pi}}}
\newcommand{\bfPsi}{{\boldsymbol{\Psi}}}
\newcommand{\bfdelta}{{\boldsymbol{\delta}}}
\newcommand{\bfvarepsilon}{{\boldsymbol{\varepsilon}}}
\newcommand{\bftheta}{{\boldsymbol{\theta}}}
\newcommand{\bflambda}{{\boldsymbol{\lambda}}}
\newcommand{\bfmu}{{\boldsymbol{\mu}}}
\newcommand{\bbE}{\mathbb{E}}
\newcommand{\bbR}{\mathbb{R}}
\newcommand{\e}   {\textnormal{e}}                         % Eulerian number
\renewcommand{\i}   {\textnormal{i}}                         % Eulerian number
\newlength\iwidth
\newlength\iheight
\newcommand{\true}{{\rm true}}
\newdimen\iwidth
\newdimen\iheight
\theoremstyle{plain} % {plain} {definition} {remark}
\newcommand{\thf} {\tfrac{1}{2}}                            % 1/2
\newcommand{\TheTitle}{Optimal Experimental Design for Constrained Inverse Problems}
\newcommand{\TheAuthors}{L. Ruthotto, J. Chung, and M. Chung}
\title{{\TheTitle}\thanks{Submitted to the editors \today.
\funding{This work was partially supported by NSF DMS 1522599 (L. Ruthotto) and NSF DMS 1723005 (J. Chung and M. Chung).}}}
\author{
Lars Ruthotto\thanks{Department of Mathematics and Computer Science, Emory University, Atlanta, GA
  (\email{lruthotto@emory.edu}, \url{http://www.mathcs.emory.edu/\~lruthot/}).}
  \and
  Julianne Chung\thanks{Department of Mathematics, Computational Modeling and Data Analytics Division, Academy of Integrated Science, Virginia Tech, Blacksburg, VA
    (\email{jmchung@vt.edu}, \url{http://www.math.vt.edu/people/jmchung/}).}
  \and
  Matthias Chung\thanks{Department of Mathematics, Computational Modeling and Data Analytics Division, Academy of Integrated Science, Virginia Tech, Blacksburg, VA
    (\email{mcchung@vt.edu}, \url{http://www.math.vt.edu/people/mcchung/}).}
}
\begin{document}

\maketitle

% REQUIRED
\begin{abstract}
In this paper, we address the challenging problem of optimal experimental design (OED) of constrained inverse problems.
We consider two OED formulations that allow to reduce the experimental costs by minimizing the number of measurements. The first formulation assumes a fine discretization of the design parameter space and uses sparsity promoting regularization to obtain an efficient design.  The second formulation parameterizes the design and seeks optimal placement for these measurements by solving a small-dimensional optimization problem. We consider both problems in a Bayes risk as well as an empirical Bayes risk minimization framework.
For the unconstrained inverse state problem, we exploit the closed form solution for the inner problem to efficiently compute derivatives for the outer OED problem. 
The empirical formulation does not require an explicit solution of the inverse problem and therefore allows to integrate constraints efficiently.  A key contribution is an  efficient optimization method for solving the resulting, typically high-dimensional, bilevel optimization problem using derivative-based methods. To overcome the lack of non-differentiability in active set methods for inequality constraints problems, we use a relaxed interior point method.  To address the growing computational complexity of empirical Bayes OED, we parallelize the computation over the training models.  Numerical examples and illustrations from tomographic reconstruction, for various data sets and under different constraints, demonstrate the impact of constraints on the optimal design and highlight the importance of OED for constrained problems.
\end{abstract}

% REQUIRED
\begin{keywords}
  experimental design, constrained optimization, tomographic reconstruction, Bayes risk, and empirical Bayes risk
\end{keywords}

% REQUIRED
\begin{AMS}
62K05, 65F22, 80M50
% 62K05 Optimal designs, design of experiments
% 62F22 Ill-posedness, regularization
% 80M50 Optimizaiton, operations research, mathematical programming
   %68Q25, 68R10, 68U05
\end{AMS}

\section{Introduction} \label{sec:introduction}
The ability to optimally configure or design an experimental setup, or optimal experimental design (OED), can have significant benefits and improvements in a wide range of scientific and engineering applications \cite{Atkinson2007,Pukelsheim2006}.  Only recently has the focus shifted from OED for well-posed problems to OED for ill-posed inverse problems, where an unresolved challenge is how to efficiently include constraints on the state parameters (i.e., the solutions of the inverse problem) \cite{HaberEtAl2008OED,HaberEtAl2010}.  In this work, we investigate the impact of state constraints on the optimal design, and we propose a unified OED framework for linear inverse problems with linear equality and inequality constraints in the context of tomographic reconstruction.

Before introducing the OED problem, we first describe the discrete inverse problem.
Given a design parameter $\bfp \in \bbR^\ell$ that describes the experiment setup, the observations are given as
\begin{equation}
    \bfd(\bfp) = \bfM(\bfp) \bff_\true + \bfvarepsilon(\bfp),
    \label{eq:forwardmodel}
\end{equation}
where $\bff_\true \in \bbR^n$ is the exact model that we wish to reconstruct, $\bfM: \bbR^{\ell} \to \bbR^{m \times n}$ is the design-dependent forward operator with matrix $\bfM(\bfp)$ describing the parameter-to-observation map,
and $\bfvarepsilon(\bfp)\in\bbR^{m}$ represents additive noise.
Some examples of $\bfM(\bfp)$ include the map from image to sinogram in tomography (see, e.g., \cite{hsieh2003computed,kak2001principles,natterer2001mathematics}) or the map onto an observation space of the solution of a partial or ordinary differential equation~(see, e.g., \cite{Arridge1999,CheneyEtAl1999,Haber2015}). In this work we assume that the measurements $\bfd(\bfp) \in\bbR^m$ depend on the underlying design of the experiment
(e.g., $\bfp$ may determine the positions of the sources and/or detectors or represent the times at which measurements are taken).  The noise can come from various sources, e.g., measurement errors, modeling errors, and numerical rounding errors. For simplicity we assume that $\bfvarepsilon(\bfp)$ is normally distributed with zero mean and known symmetric positive definite covariance matrix $\bfGamma_\bfvarepsilon(\bfp)\in \bbR^{m \times m}$ for any design $\bfp$.

We focus on ill-posed inverse problems where regularization in the form of prior knowledge is required to compute stable, reasonable approximations of $\bff_\true$ \cite{hansen2010discrete}.  Following a Bayesian framework~\cite{Calvetti:2007dq,Kaipio:2006gx}, we treat $\bff_{\true}$ as a random variable with a truncated multivariate normal distribution with probability density given as
\begin{equation*}
  \pi(\bff_\true) =
    \begin{cases}
      c\,\e^{-\tfrac{\gamma^2}{2} (\bff_{\true}-\bfmu)\t\bfGamma_\bff^{-1}(\bff_{\true}-\bfmu) } , & \text{if } \bfC_{\rm e} \bff_{\true} - \bfc_{\rm e} = \bfzero$ and $\bfC_{\rm i} \bff_{\true} - \bfc_{\rm i} \geq \bfzero, \\
      0, & \text{otherwise,}
    \end{cases}
\end{equation*}
with appropriate constants $c,\gamma > 0$ and given positive definite covariance matrix $\bfGamma_\bff \in \bbR^{n\times n}$ and mean $\bfmu \in \bbR^n$. Here, the pairs $\bfC_\e \in \R^{m_e \times n}, \bfc_\e\in \R^{m_e}$ and $\bfC_{\rm i}\in \R^{m_i \times n}$, $\bfc_{\rm i}\in \R^{m_i}$ define linear equality and inequality constraints on $\bff$, respectively.
Denoting by $\bfI_n \in \R^{n\times n}$ the identity matrix, for example, bound constraints correspond to choosing $\bfC_{\rm i} = [\bfI_n; -\bfI_n] \in \R^{2n\times n}$ and $\bfc_{\rm i} = [\bff_{\rm L}; -\bff_{\rm H}]\in\R^{2n}$ where $\bff_{\rm L}$ and $\bff_{\rm H}$ contain lower and upper bounds respectively (e.g., non-negativity constraints are reasonable when reconstructing density images).  Furthermore, setting $\bfC_\e = \bfe^\top$, where $\bfe$ is a vector of all ones, allows one to fix the integral (or mass) of
$\bff$,
% $\bff_{\true}$,
which might be helpful in applications such as emission tomography.  A wide range of prior knowledge can be included to estimate $\bff_{\true}$ in this formulation.  The unconstrained case reduces to a simple Gaussian distribution on the entire domain \cite{calvetti2007introduction} and can provide insight in the Bayes formulation.

For fixed $\bfp,$ the maximum a-posteriori (MAP) estimate $\widehat \bff(\bfp)$ provides an estimator to~\eqref{eq:forwardmodel} and can be obtained by minimizing the negative log likelihood of the posterior probability distribution function, i.e.,
\begin{gather}
  \begin{split}\label{eq:ip}
    \widehat \bff(\bfp) = \argmin_{\bff} \tfrac{1}{2}\norm[\bfGamma^{-1}_\bfvarepsilon(\bfp)]{\bfM(\bfp)\bff -\bfd(\bfp)}^2 + \tfrac{\gamma^2}{2}\norm[2]{\bfL(\bff -\bfmu)}^2\\
    \text{subject to } \ \bfC_{\rm e} \bff - \bfc_{\rm e} = \bfzero \text{ and } \bfC_{\rm i} \bff - \bfc_{\rm i} \geq \bfzero\,,
  \end{split}
\end{gather}
where $ \bfL\t\bfL = \bfGamma_\bff^{-1}$.  In general, obtaining $\widehat \bff(\bfp)$ requires solving a constrained optimization problem, which in this case is a convex quadratic programming problem. In the absence of inequality constraints, the optimal solution to~\eqref{eq:ip} depends linearly on the data, which can be used for efficient optimization of the design parameter; see~\cite{HaberEtAl2008OED}. In the presence of inequality constraints however, the optimality condition becomes nonlinear in the data and, thus, solving~\eqref{eq:ip} becomes computationally challenging. We assume a unique minimizer of the constrained optimization problem exists.

In this work we consider optimal design problems that aim at selecting an experimental design that not only leads to most accurate estimates of $\bff_\true$ but also minimizes experimental costs.  Finding a balance between these often conflicting goals can be challenging.  For example, in tomographic reconstruction, obtaining more projection data may lead to higher resolution image reconstructions, but more scans may lead to more harmful radiation to the patient, longer scan times, and higher operational costs.

For our OED problem, the goal is to obtain design parameters $\widehat\bfp$ that minimize the Bayes risk, i.e., the expected value of the mean squared reconstruction errors, while simultaneously minimizing measurement costs.  This requires solving an optimization problem of the form,
\begin{equation}
    \label{eq:designproblem}
    \begin{split}
    \min_{\bfp \in \Omega}& \quad \calJ(\bfp) =  \bbE \ \tfrac{1}{2}\norm[2]{\widehat\bff(\bfp) - \bff_\true}^2 + \calR(\bfp),
    \end{split}
\end{equation}
where $\Omega\subset\R^p$ is the set of feasible design parameters, $\bbE$ is the expected value where $\bff_\true$ and $\bfvarepsilon(\bfp)$ are random variables, and the functional $\calR$ encodes the measurement costs.  Two formulations in the context of tomographic reconstruction will be described in Section~\ref{sec:problem_setup}.

The literature on OED methodologies is vast, with a range of techniques tailored to various
optimality criteria in both Bayesian \cite{chaloner1995bayesian} and non-Bayesian settings.  Classic OED works include \cite{Atkinson2007,pazman1986foundations,Pukelsheim2006}, but a recent large effort has been made to develop efficient algorithms for obtaining designs that minimize a loss function of the Fisher information matrix and to new applications, e.g., in biology and exploratory drilling \cite{HaberEtAl2008OED,HaberEtAl2010,haber2012numerical,Nowak2010}.
Many of these approaches follow an empirical Bayes risk minimization framework and exploit the case where the model parameters, here $\widehat \bff(\bfp)$, depend linearly on the observables $\bfd(\bfp).$  This is not necessarily the case for the constrained inverse problems of interest here.  Extensions to nonlinear problems have been considered in \cite{alexanderian2016fast,Chung2012,huan2013simulation}, and an approach based on consistent Bayesian inference was developed in~\cite{butler2017consistent} and used for OED in \cite{walsh2017optimal}.

The primary goal in many design problems is to enforce sparse sampling in the design parameters.  Sparsity enforcing regularization in the context of ODE was considered in \cite{alexanderian2014optimal,Chung2012,haber2012numerical,haber2015optimal}, but no additional state constraints were considered.  The main contribution of this work is the inclusion of state constraints in OED frameworks, which is a critical yet missing piece in the shift from OED for well-posed to OED for ill-posed problems.  Two open questions include (1) how does the inclusion of state constraints \emph{impact} the optimal design? And (2) how does one \emph{efficiently} incorporate constraints in an OED framework?
In this paper, we address both of these questions by developing and investigating a unified OED framework for constrained inverse problems.

An overview of this paper is as follows.  In Section~\ref{sec:problem_setup} we describe two problem formulations for OED in the context of tomographic reconstruction, one of which assumes a fine discretization of the parameter space and enforces sparsity of the design and the other assumes a fixed number of design parameters and seeks optimal locations for these measurements.  In Section~\ref{sec:problemformulation} we investigate various problem formulations for the OED problems that demonstrate the range of problems and constraints that can be addressed. We describe efficient computational approaches in Section~\ref{sec:computational}.
In Section~\ref{sec:numerical_experiments}, we demonstrate the impact of state constraints on the design, which is something that has not been investigated before, and we provide numerical results that demonstrate the effectiveness of our approach.  Conclusions are provided in Section~\ref{sec:conclusions}.

\section{Motivating Application: Tomographic Reconstruction}
\label{sec:problem_setup}
Although the methods described in this paper extend to more general problems and applications, we focus our discussion on two problem setups in the context of tomographic reconstruction.

Tomography is a widely used imaging technique where penetrating waves (e.g., x-ray or acoustic waves) transverse an object and are collected or detected~\cite{hsieh2003computed,kak2001principles,natterer2001mathematics}.
Oftentimes, multiple projections are made as the wave source rotates around the object.  Then given these observed measurements, the goal of the inverse problem is to reconstruct the interior properties (e.g., densities) of the object at different locations; see Figure~\ref{fig:tomo}.
\begin{figure}[bthp]
 \begin{center}
  \input{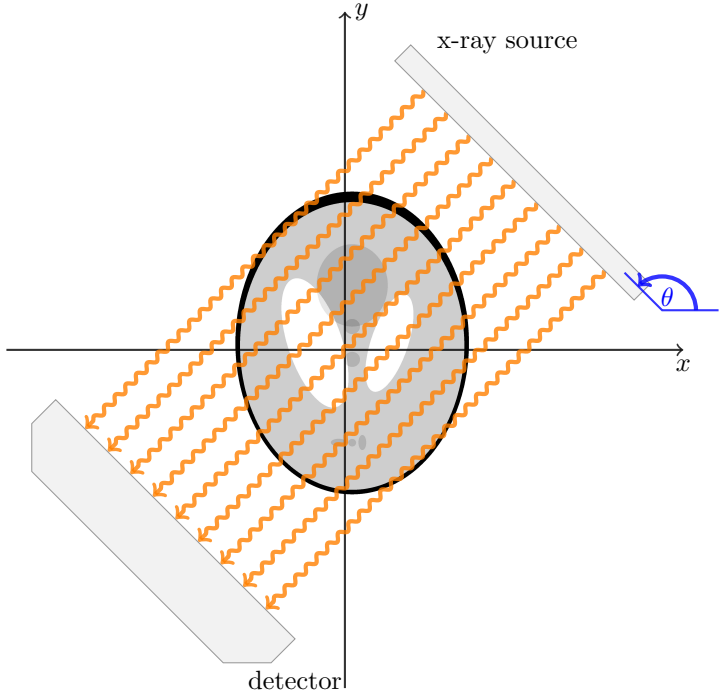}
  \end{center}
  \caption{Illustration of a tomography experiment.}\label{fig:tomo}
\end{figure}
Mathematically, we can model the transmission process for one projection via a (sparse) matrix $\bfT \in \bbR^{n_r \times n}$ where $n_r$ is the number of rays.  Then the noise-free projection data obtained by rotating the source $\theta$ degrees clockwise can be modeled as $\bfd(\theta) = \bfT \bfR(\theta) \bff_\true\,,$
where $\bfR(\theta)\in \bbR^{n \times n}$ rotates the object by $\theta$ degrees counterclockwise.  Typically we assume $\theta \in [0, 180].$ Next we describe two OED problems.

\paragraph{OED Problem A}
In the first scenario, we assume that a fine discretization of the set of projection angles $\bftheta = [\theta_1, \ldots, \theta_\ell]\t$ is given and aim at identifying the angles that provide the most important measurements. To this end, we introduce the design parameters $\bfp\in \bbR^\ell$ with $\bfp \geq \bfzero$ whose components encode the importance of each measurement.
Define the ordered index set $\calI(\bfp) = \{i \, : \, p_i > 0 \}$ and denote $k(\bfp)$ to be the cardinality of $\calI(\bfp).$ We define matrix $\bfE(\bfp) \in \bbR^{\ell \times k(\bfp)}$ to contain the $i$-th standard basis vectors for $i \in \calI(\bfp).$ Then, $\bfM(\bfp)$ and $\bfd(\bfp)$ are given by
\begin{equation}
\bfM(\bfp) =
\bfP\left( \bfI_\ell \kron \bfT \right) \begin{bmatrix} \bfR(\theta_1)\\ \vdots \\ \bfR(\theta_\ell) \end{bmatrix}\,, \qquad \text{and}  \qquad \bfd(\bfp) = \bfP\bfd,
\end{equation}
where $\bfP = \left(\bfE(\bfp)\t \diag(\bfp)\right) \otimes \bfI_{n_r} \in \bbR^{k(\bfp) n_r \times \ell n_r}.$  Notice that $\bfE(\bfp)\t \diag(\bfp)$ essentially extracts all of the non-zero rows of $\diag(\bfp).$ We assume that $\bfGamma_\bfvarepsilon(\bfp)^{-1}  = \sigma^2 \bfI_{k(\bfp) n_r}$.

Since we would like to keep the number of projection angles low, we incorporate a sparsity-inducing prior on the design parameters.  That is, we use the $\ell_1$-norm $\calR(\bfp) = \beta \norm[1]{\bfp}$ with $\beta>0$ large enough so that we can get a relaxation of $\|\cdot\|_0$ and enforce sparsity in the design parameters \cite{candes2006stable,candes2005decoding}.
  In the Bayesian framework, this is equivalent to imposing a Laplace prior distribution on $\bfp$ (see e.g., \cite{huang2013optimal,johnson1972distributions}).  Furthermore, we assume $\Omega = \R^{\ell}_+$ (non-negative orthant) such that continuous optimization methods can be used.

In summary, \emph{OED problem A}
can be written as the bilevel optimization problem
\begin{gather}
		\min_{\bfp \geq \bfzero} \quad \bbE \ \tfrac{1}{2}\norm[2]{\widehat\bff(\bfp) - \bff_\true}^2 + \beta \norm[1]{\bfp} \label{eq:designproblemA}\\
		\text{subject to} \nonumber\\
		\hat \bff(\bfp) = \argmin_{\bff} \tfrac{1}{2}\norm[2]{\bfM(\bfp)\bff -\bfd(\bfp)}^2 + \tfrac{\alpha^2}{2}\norm[2]{\bfL(\bff -\bfmu)}^2 \label{eq:ipA}\\
		\text{subject to } \ \bfC_{\rm e} \bff - \bfc_{\rm e} = \bfzero \text{ and } \bfC_{\rm i} \bff - \bfc_{\rm i} \geq \bfzero\,, \nonumber
\end{gather}
where $\alpha = \gamma/\sigma$ and zero values in $\bfp$ correspond to zeroing out measurements for the corresponding angle.
After computing such an $\ell_1$-regularized design, the solution can be used to identify important components, and a second optimization can be done to optimize the weights of the non-zero components (see~\cite{HaberEtAl2008OED}). While  the sparsity of the design generally depends on the choice of $\beta$ (e.g., the larger $\beta$ the fewer non-zero elements in the design vector) a relevant issue in some applications is identifying $\beta$ so that a design with a given number of measurements is obtained.

\paragraph{OED Problem B} In the second scenario, we present a method that optimizes the design parameters for a fixed number of measurements. Suppose $\bfp\in \Omega \subset \bbR^\ell$, $\ell < m,$ e.g., for tomography it contains angles $ \bfp_{\rm L}\leq \bfp \leq \bfp_{\rm H}$ corresponding to locations of the sources.  Then in the inverse problem,
\begin{equation}
\bfM(\bfp) = \left( \bfI_\ell \kron\bfT \right) \bfR(\bfp), \quad \mbox{ where } \quad \bfR(\bfp) = \begin{bmatrix} \bfR(p_1) \\ \vdots \\ \bfR(p_\ell) \end{bmatrix}
\end{equation}
 and $\bfGamma_\bfvarepsilon = \sigma^2\bfI_m$. Assuming that no additional regularization is required for $\bfp$ (other than the above described reparameterization), \emph{OED problem B} reads
\begin{gather}
		\min_{\bfp \in \Omega} \quad \bbE \ \tfrac{1}{2}\norm[2]{\widehat\bff(\bfp) - \bff_\true}^2 \label{eq:designproblemB}\\
		\text{subject to} \nonumber\\
		\hat \bff(\bfp) = \argmin_{\bff} \tfrac{1}{2}\norm[2]{\bfM(\bfp)\bff -\bfd(\bfp)}^2 + \tfrac{\alpha^2}{2}\norm[2]{\bfL(\bff -\bfmu)}^2 \label{eq:ipB}\\
		\text{subject to } \ \bfC_{\rm e} \bff - \bfc_{\rm e} = \bfzero \text{ and } \bfC_{\rm i} \bff - \bfc_{\rm i} \geq \bfzero \,. \nonumber
	\end{gather}

Note that mathematically OED problem A and B differ in the particular choice of $\bfM(\bfp)$, $\bfd(\bfp)$ and the sparsity regularization term.  We refer to problems~\eqref{eq:designproblemA} and~\eqref{eq:designproblemB} as the design problems and problems~\eqref{eq:ipA} and~\eqref{eq:ipB} as the underlying inverse or state problems.

\section{Design Problem Formulations}
\label{sec:problemformulation}
In this section, we consider OED problems~\eqref{eq:designproblemA} and~\eqref{eq:designproblemB} and discuss two problem formulations, each of which may have advantageous properties depending on problem assumptions and constraints.

\paragraph{Bayes risk minimization} Notice that the expected values in~\eqref{eq:designproblemA} and~\eqref{eq:designproblemB} are defined in terms of the distributions of $\bff_\true$ and $\bfvarepsilon(\bfp)$.  For problems where such knowledge is available or can be well approximated (e.g., by the sample mean or sample covariance), we investigate Bayes risk minimization for both design problems.  This approach assumes that no inequality constraints are included on the inverse state problems~\eqref{eq:ipA} and~\eqref{eq:ipB}. While the theory can be extended to equality constrained problems, we consider unconstrained inverse problems for simplicity. Under these assumptions the MAP estimate is given by
\begin{equation} \label{eq:lsproblem}
	\widehat\bff(\bfp) = \bfQ(\bfp)^{-1}\left( \bfM(\bfp)\t \left(\bfM(\bfp)\bff_{\rm true} + \bfvarepsilon(\bfp)\right)+\alpha^2 \bfL\t \bfL \bfmu \right),
\end{equation}
where $\bfQ(\bfp) = \bfM(\bfp)\t \bfM(\bfp) + \alpha^2 \bfL\t\bfL\,.$
% $\bfC(\bfp) =  \left( \bfM(\bfp)\t \bfM(\bfp) + \alpha^2 \bfL\t\bfL \right)^{-1},$
Then the design objective (for convenience omitting the design costs expressed by $\calR$) can be written as
\begin{align*}
\calJ(\bfp) & = \tfrac{1}{2} \bbE \norm[2]{\bfQ(\bfp)^{-1} \left( \bfM(\bfp)\t \left(\bfM(\bfp)\bff_{\rm true} + \bfvarepsilon(\bfp)\right)+\alpha^2 \bfL\t \bfL \bfmu \right) - \bff_{\rm true}}^2\\
 & = \tfrac{1}{2} \bbE \norm[2]{\left(\bfQ(\bfp)^{-1} \bfM(\bfp)\t \bfM(\bfp) -\bfI_n \right) \bff_{\rm true} + \bfQ(\bfp)^{-1} (\bfM(\bfp)\t \bfvarepsilon(\bfp)+\alpha^2 \bfL\t \bfL \bfmu)}^2\,.
\end{align*}
Let $\bfK(\bfp) = \bfQ(\bfp)^{-1} \bfM(\bfp)\t \bfM(\bfp) -\bfI_n$ and denote $\trace(\cdot)$ to be the trace of a matrix, then by utilizing the quadratic form property $\bbE(\bfdelta\t\bfLambda\bfdelta) = \bfmu_\bfdelta\t\bfLambda \bfmu_\bfdelta + \trace(\bfLambda\bfGamma_\bfdelta)$
along with the above assumptions, we get
% and the assumptions provided above, we get
\begin{align*}
2 \calJ(\bfp) =\ & \bbE\Big( \bff_{\rm true}\t \bfK(\bfp)\t\bfK(\bfp) \bff_\true +2 \bff_{\rm true}\t \bfK(\bfp)\t \bfQ(\bfp)^{-1} \left(\bfM(\bfp)\t \bfvarepsilon(\bfp)+\alpha^2 \bfL\t \bfL \bfmu \right) \\
+& \ \left(\bfM(\bfp)\t \bfvarepsilon(\bfp)+\alpha^2 \bfL\t \bfL \bfmu\right)\t \bfQ(\bfp)^{-\top} \bfQ(\bfp)^{-1} \left(\bfM(\bfp)\t \bfvarepsilon(\bfp)+\alpha^2 \bfL\t \bfL \bfmu\right) \Big)\\
=\ & \bfmu\t \bfK(\bfp)\t\bfK(\bfp) \bfmu + \gamma^{-2} \trace\left(\bfK(\bfp)\t\bfK(\bfp) \bfGamma_\bff\right) + 2 \alpha^2 \bfmu\t \bfK(\bfp)\t \bfQ(\bfp)^{-1} \bfL\t \bfL \bfmu \\
+& \ \sigma^{-2} \trace\left(\bfM(\bfp) \bfQ(\bfp)^{-\top} \bfQ(\bfp)^{-1} \bfM(\bfp)\t\right) + \alpha^4 \bfmu\t \bfL\t \bfL \bfQ(\bfp)^{-\top} \bfQ(\bfp)^{-1} \bfL\t \bfL \bfmu.
\end{align*}
Notice that the first, third, and fifth term sum up to $\norm[2]{\left(\bfK(\bfp)+\alpha^2 \bfQ(\bfp)^{-1} \bfL\t \bfL\right)\bfmu}^2 = 0$.  Thus, we get
\begin{align*}
\calJ(\bfp) & = \tfrac{1}{2\gamma^2 } \norm[\rm F]{\bfK(\bfp)\bfL^{-1}}^2 +  \tfrac{1}{2\sigma^2} \norm[\rm F]{\bfQ(\bfp)^{-1} \bfM(\bfp)\t}^2\\
& =   \tfrac{1}{2\sigma^2} \norm[\rm F]{\bfQ(\bfp)^{-1} \begin{bmatrix} \bfM(\bfp) \\ \alpha \bfL \end{bmatrix}\t  }^2 \\
& =   \tfrac{1}{2\sigma^2} \norm[\rm F]{\bfM_\alpha^\dagger (\bfp) }^2\,,
\end{align*}
where $\bfM_\alpha(\bfp) = \begin{bmatrix} \bfM(\bfp) \\ \alpha \bfL \end{bmatrix}$, the $\dagger$ in the last equation denotes the Moore-Penrose pseudoinverse, and $\norm[\rm F]{\,\cdot\,}$ denotes the Frobenius norm.  Recall that the goal of OED is to find design parameters that minimize $\calJ(\bfp)$.
Thus, the significance of this result is that in the Bayes formulation with an unconstrained state problem, the optimal design parameters will correspond to a regularized coefficient matrix that has smallest pseudoinverse in the Frobenius norm sense (with an additional regularization term for OED problem A).  Due to the nonlinear dependence on $\bfp,$ the design problem does not admit a closed-form solution; however, numerical methods and computational simplifications can be used to obtain optimal parameters (see Section~\ref{sec:computational}).

\paragraph{Empirical Bayes risk minimization} For problems where distributions of $\bff_\true$ and $\bfvarepsilon(\bfp)$ may be unknown or not obtainable, we consider empirical Bayes risk design problems, where training or calibration data $\bff_\true^{(k)}$, $k = 1, \ldots, K$ are used to approximate the expected value. Such data are often readily available, and in the case of tomography provide a clear understanding of how images may look.  Stochastic programming methods such as stochastic average approximation (SAA) or stochastic approximation (SA) can be used to incorporate these training data in OED problems A and B. Solving stochastic optimization problems often requires computationally intensive techniques in order to obtain a good approximation of the expected value \cite{carlin1997bayes,chung2017stochastic,shapiro2009lectures}.  Here we consider an SAA approach, but remark that for very large numbers of training data, this approach may not be feasible.  However, a benefit of this formulation, compared to the Bayes risk minimization procedure described above, is the ability to incorporate constraints on the state problem and take advantage of existing constrained optimization algorithms.

We follow the \emph{empirical} Bayes approach in~\cite{Chung2012,HaberEtAl2008OED,HaberEtAl2010}  that treats given training data as samples from the distribution and uses the sample mean to approximate the expected value.
Note that, due to the presence of the regularization, the estimator given by~\eqref{eq:ip} will be biased unless the true solution is in the nullspace of $\bfL$. Thus, design choices cannot solely be based on properties of the forward operator $\bfM(\bfp)$ (see also the the discussion in~\cite{HaberEtAl2008OED}).

Assume that we are given a set of training data that consists of $N$ true models $\bff^{(1)}_\true, \ldots, \bff^{(N)}_\true \in\R^n$. For a fixed design parameter $\bfp$,  we can simulate datasets $\bfd^{(1)}(\bfp),\ldots,\bfd^{(N)}(\bfp) \in\R^m$ using~\eqref{eq:forwardmodel} and obtain reconstructions $\widehat{\bff}^{(1)}(\bfp),\ldots,\widehat{\bff}^{(N)}(\bfp) $ by solving the constrained inversion problem~\eqref{eq:ip} using that data.
Then we can approximate the Bayes risk OED problem~\eqref{eq:designproblem} with the following empirical Bayes risk OED problem,
\begin{equation}\label{eq:OED}
	\begin{split}
		\min_{\bfp \in \Omega} &\quad   \calJ_N(\bfp) =  \tfrac{1}{2N} \sum_{i=1}^N \left \| \widehat{\bff}^{(i)}(\bfp) - \bff_\true^{(i)} \right\|^2 +  \calR(\bfp)\\
		\text{ subject to} &  \quad \widehat{\bff}^{(i)}(\bfp) \text{ solves}~\eqref{eq:ip} \text{ for data } \bfd^{(i)}(\bfp).
	\end{split}
\end{equation}

The bilevel optimization problem~\eqref{eq:OED} could be solved as a large constrained optimization problem.  Instead we follow a technique commonly used in the PDE constrained optimization literature where we eliminate the constraint by solving for $\widehat{\bff}^{(i)}(\bfp), $ for $i=1,2,\ldots,N$ yielding the \emph{reduced} problem. Although both approaches have their merits, the reduced problem might be more attractive, especially if $N\gg1$.  Furthermore, the reduced problem allows for parallel computing, since the constraints can be eliminated independently.

Assuming that $\Omega \subset \R^{\ell}$ is closed and convex,~\eqref{eq:OED} can be solved, for example, using a Projected Steepest Descent or Projected Gauss-Newton method (see also~\cite{HaberEtAl2010}).  In both cases, we need to compute the gradient of the design objective function,
\begin{equation}
	\label{eq:OEDgradient}
	\nabla_{\bfp}  \calJ_N(\bfp) = \tfrac{1}{N} \sum_{i=1}^N \nabla_{\bfp}\widehat{\bff}^{(i)} \left(\widehat{\bff}^{(i)}(\bfp) - \bff_\true^{(i)} \right) + \nabla_{\bfp} \calR\,,
\end{equation}
where $\nabla_{\bfp}\widehat{\bff}^{(i)}$ contains the derivatives of the reconstructed image $\widehat{\bff}^{(i)}(\bfp)$ with respect to the design parameters.
For OED problem A, where $\calR(\bfp) =\beta \|\bfp\|_1$ and $\bfp \geq \bf0$, computing the gradient for the regularization term is straightforward.
However, the more challenging computation involves the sensitivity matrix $\nabla_{\bfp}\widehat{\bff}^{(i)}$ that is solver-dependent and outlined in the following section.

\section{Computational approaches for OED}
\label{sec:computational}

In this section, we describe efficient computational approaches for computing solutions to OED problems A and B.

\paragraph{Bayes risk minimization}
We begin with computational simplifications that can be used for the Bayes risk minimization problem.  First we consider the case where $\bfL =\bfI_n$ (standard Tikhonov regularization).  Assume $\bfM(\bfp)\in \bbR^{m \times n}$ with rank $r \leq \min(m,n)$ and let $\bfM(\bfp) = \bfU(\bfp) \bfSigma(\bfp) \bfV(\bfp)\t$ be the SVD where the non-zero diagonal elements of $\bfSigma(\bfp)$ are $\sigma_1(\bfp) \geq \ldots \geq \sigma_r(\bfp)$.
Then, with some algebraic manipulations, it can be shown that design objective (again omitting the regularization term for convenience) can be written as
\begin{align*}
 \calJ(\bfp) & =  \tfrac{1}{2\sigma^2} \norm[\rm F]{(\bfSigma(\bfp)\t \bfSigma(\bfp) + \alpha^2 \bfI_n)^{-1} \begin{bmatrix}\bfSigma(\bfp)\t & \alpha \bfI_n \end{bmatrix} }^2 \\
 & = \tfrac{1}{2\sigma^2} \left(\sum_{i=1}^r \frac{1}{\sigma_i(\bfp)^2 + \alpha^2} +  \frac{n-r}{\alpha^2} \right)\,.
\end{align*}
Note that the second term should not be ignored since the rank of $\bfM(\bfp)$ may depend on $\bfp.$

For the general case where $\bfL\neq \bfI_n,$ the generalized SVD could be used to simplify the objective function as
\begin{equation*}
 \calJ(\bfp)  =  \tfrac{1}{2\sigma^2} \norm[\rm F]{ \bfX(\bfp)^{-1}(\bfSigma(\bfp)\t \bfSigma(\bfp) + \alpha^2 \bfPsi(\bfp)\t \bfPsi(\bfp))^{-1/2} }^2\,,
 \end{equation*}
where
$$\bfM(\bfp) = \bfU(\bfp) \bfSigma(\bfp) \bfX(\bfp) \t \quad \mbox{and} \quad \bfL = \bfV(\bfp) \bfPsi(\bfp) \bfX(\bfp)\t$$
and $\bfU(\bfp)\in \bbR^{m\times m}$ and $\bfV(\bfp)\in \bbR^{n\times n}$ are orthogonal matrices, $\bfSigma(\bfp)\in \bbR^{m\times n}$ and $\bfPsi(\bfp)\in \bbR^{n\times n}$ are diagonal matrices and $\bfX(\bfp) \in \bbR^{n \times n}$ is a nonsingular matrix \cite{stewart2001matrix}.  A simpler approach for $\bfL\neq \bfI_n$ is to compute the singular values of $\bfM_\alpha(\bfp)$ denoted $\sigma_{\alpha,1}(\bfp), \ldots, \sigma_{\alpha,n}(\bfp)  > 0$, in which case
$$\calJ(\bfp) =  \tfrac{1}{2\sigma^2} \sum_{i=1}^n \left(\sigma_{\alpha,i}(\bfp)\right)^{-2}\,.$$

Next we exploit the specific structure of OED problem A to analyze the dependence on $\bfp$. Assume that all $\bfp >\bfzero$ so that $\bfE(\bfp) = \bfI_\ell$, then
$$\bfM(\bfp) = (\diag(\bfp) \otimes \bfI_{n_r}) \begin{bmatrix} \bfT \bfR(\theta_1) \\ \vdots \\ \bfT \bfR(\theta_\ell)\end{bmatrix} = \begin{bmatrix} p_1 \bfT \bfR(\theta_1) \\ \vdots \\ p_\ell \bfT \bfR(\theta_\ell)\end{bmatrix}\,. $$
The singular values of $\bfM(\bfp)$ are given in the following Lemma.
\begin{lemma}
 Let $\bfA_j \in \bbR^{m \times n}$ for $j = 1, \ldots, \ell$, then the unsorted singular values of $(\diag(\bfp) \kron \bfI) \begin{bmatrix} \bfA_1 \\ \vdots \\ \bfA_\ell\end{bmatrix}$ are given by $\sqrt{\sum_{j=1}^\ell \left(\sigma_i^{(j)}\right)^2 p_j^2}, i=1, \ldots, n$ where $\sigma_i^{(j)}$ is the $i$-th singular value of $\bfA_j$\,.
\end{lemma}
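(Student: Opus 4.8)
The plan is to turn the claim into a single eigenvalue computation. Set
\[
  \bfC \;:=\; (\diag(\bfp)\kron\bfI)\begin{bmatrix}\bfA_1\\ \vdots\\ \bfA_\ell\end{bmatrix}.
\]
Since $\diag(\bfp)\kron\bfI$ is block diagonal and scales the $j$-th block of $m$ rows by $p_j$, the product is just the vertical stack of the scaled blocks $p_j\bfA_j$, so
\[
  \bfC\t\bfC \;=\; \sum_{j=1}^{\ell} p_j^2\,\bfA_j\t\bfA_j .
\]
The unsorted singular values of $\bfC$ are exactly the square roots of the eigenvalues of this symmetric positive semidefinite $n\times n$ matrix, so it suffices to identify its spectrum; specializing $\bfA_j=\bfT\bfR(\theta_j)$ at the end then gives the singular values of $\bfM(\bfp)$.

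For the spectrum I would bring in SVDs $\bfA_j=\bfU_j\bfSigma_j\bfV_j\t$, so that $\bfA_j\t\bfA_j=\bfV_j(\bfSigma_j\t\bfSigma_j)\bfV_j\t$ with $\bfSigma_j\t\bfSigma_j=\diag((\sigma_1^{(j)})^2,\dots,(\sigma_n^{(j)})^2)$. If the $\bfA_j$ share a common matrix of right singular vectors, $\bfV_j\equiv\bfV$ (after a consistent ordering of the $\sigma_i^{(j)}$), then the Gram matrices $\bfA_j\t\bfA_j$ are simultaneously diagonalized by $\bfV$ and
\[
  \bfC\t\bfC \;=\; \bfV\Bigl(\,\textstyle\sum_{j=1}^{\ell} p_j^2\,\bfSigma_j\t\bfSigma_j\,\Bigr)\bfV\t ,
\]
where the bracketed matrix is diagonal with $i$-th entry $\sum_{j=1}^{\ell} p_j^2(\sigma_i^{(j)})^2$. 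Reading this diagonal off yields the eigenvalues of $\bfC\t\bfC$; their square roots are the asserted singular values, and since $\bfV$ is orthogonal this is a genuine eigendecomposition, so no ordering is imposed on the values and exactly $n$ of them occur, as claimed.

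The step I expect to be the crux is the simultaneous-diagonalization used above: for arbitrary $\bfA_j$ the Gram matrices $\bfA_j\t\bfA_j$ need not commute, and then the eigenvalues of $\bfC\t\bfC=\sum_j p_j^2\bfA_j\t\bfA_j$ are in general \emph{not} the sums $\sum_j p_j^2(\sigma_i^{(j)})^2$. So the real content is to secure the common right-singular-basis property for the blocks of interest — equivalently, that the matrices $\bfR(\theta_j)\t\bfT\t\bfT\bfR(\theta_j)$ pairwise commute — or else to state the lemma under that hypothesis. I would pin this down first; the remaining Kronecker and Gram-matrix bookkeeping above is then routine.
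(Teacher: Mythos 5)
Your reduction is exactly the paper's: the paper's entire proof consists of the one-line assertion that the eigenvalues of $\sum_{j=1}^\ell p_j^2\,\bfA_j\t\bfA_j$ are $\sum_{j=1}^\ell p_j^2\bigl(\sigma_i^{(j)}\bigr)^2$, which is precisely the step you isolate as the crux and decline to take for granted. You are right to balk. For arbitrary $\bfA_j$ the assertion is false: take $\ell=2$, $p_1=p_2=1$, $\bfA_1=\begin{bmatrix}1&0\\0&0\end{bmatrix}$ and $\bfA_2=\tfrac{1}{\sqrt2}\begin{bmatrix}1&1\\0&0\end{bmatrix}$. Both have singular values $1$ and $0$, so the lemma would predict singular values $\sqrt2$ and $0$ for the stacked matrix, whereas $\bfA_1\t\bfA_1+\bfA_2\t\bfA_2=\begin{bmatrix}3/2&1/2\\1/2&1/2\end{bmatrix}$ has eigenvalues $1\pm 1/\sqrt2$. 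The identity does hold under the hypothesis you name --- a common right singular basis for all $\bfA_j$ (equivalently, commuting Gram matrices $\bfA_j\t\bfA_j$), with the $\sigma_i^{(j)}$ indexed consistently with that basis --- and under that hypothesis your argument is complete and is the same simultaneous-diagonalization calculation the paper implicitly relies on.

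Note also that the extra hypothesis is not automatic in the paper's own application: with $\bfA_j=\bfT\bfR(\theta_j)$ one has $\bfA_j\t\bfA_j=\bfR(\theta_j)\t\bfT\t\bfT\bfR(\theta_j)$, and these need not commute for different angles unless $\bfT\t\bfT$ has special structure (e.g., commutes with the rotations used). So there is no gap in your proposal relative to the paper; rather, you have correctly located an unproved, and in the stated generality false, step in the paper's lemma, and the repair is exactly what you propose: either add the common-right-singular-vector (commuting-Gram) hypothesis to the statement, or verify it for the specific family of matrices at hand before invoking the formula for the singular values of $\bfM(\bfp)$.
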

\begin{proof}
  The result follows from the fact that the eigenvalues of $\sum_{j=1}^\ell p_j^2 \bfA_j\t \bfA_j$ are given as $\sum_{j=1}^\ell \left(\sigma_i^{(j)}\right)^2 p_j^2$.
\end{proof}
Thus, if we let $\bfA_j = \bfT \bfR(\theta_j)$ and define $\bfPi \in \bbR^{n \times \ell}$ such that the $j$-th column of $\bfPi$ contains the squares of the singular values of $\bfA_j$, then the squares of the singular values of $\bfM(\bfp)$ are given in the vector $\bfh = \bfPi \diag(\bfp)\bfp.$ This formulation enables us to compute the gradient as
\begin{equation}
 % \nabla_\bfp \calJ = - \sigma^{-2} \diag(\bfp) \bfPi\t \bfh_\alpha + \beta \bf1\t \bfp,
 \nabla_\bfp \calJ(\bfp) = - \sigma^{-2} \diag(\bfp) \bfPi\t \bfh_\alpha + \beta {\rm sign}(\bfp),
\end{equation}
where $\bfh_\alpha = \begin{bmatrix} (h_1 + \alpha^2)^{-2}, \ldots, (h_n + \alpha^2)^{-2}\end{bmatrix}\t$ with $h_i$ being the $i$-th element of $\bfh$.
Notice that a critical point occurs if $\bfp=\bfzero$ (ignoring the non-differentiability of $\norm[1]{\bfp}$ at $\bfp = \bfzero$), but this implies that $\bfM(\bfp)=\bfzero$ which is not reasonable.  Thus, to obtain optimal design parameters for OED problem A, a nonlinear solver must be used.

\paragraph{Empirical Bayes risk minimization}
In the absence of a closed form expression for the MAP estimate (e.g., due to the presence of inequality constraints) we follow an SAA approach and use gradient-based optimization methods to solve the empirical Bayes risk problem~\eqref{eq:OED}. To ensure differentiability, we propose using interior point methods for solving the constrained inverse state problems~\eqref{eq:ipA} and~\eqref{eq:ipB}.  Although active set methods could also be used for solving problems such as~\eqref{eq:ip}, we prefer interior point methods because the ultimate goal is fast optimization of the design, and interior point methods enable fast sensitivity computations, i.e., methods for computing the derivatives of the reconstructed solution with respect to the (possibly relaxed) design parameters. Before deriving the sensitivities we describe the interior point method used in our experiments, which is a standard primal dual interior point method for quadratic programming based on Mehrotra's predictor-corrector approach; a more detailed description can be found, e.g., in~\cite[Ch.16]{NocedalWright2006}.

We first rewrite the constrained optimization problem as a quadratic program
\begin{equation}
	\min_{\bff}  \thf \bff^\top \bfQ(\bfp) \bff + \bfb(\bfp)^\top \bff
	\quad\text{ subject to }\quad \bfC_\e \bff - \bfc_\e = \bfzero, \quad \bfC_\i \bff - \bfc_\i \geq \bfzero \,,
\end{equation}
where $\bfQ(\bfp) = \bfM(\bfp)^\top \bfM(\bfp) + \alpha^2 \bfL^\top \bfL$ and $\bfb(\bfp) = -\bfM(\bfp)^\top \bfd(\bfp)-\alpha^2 \bfL\t \bfL \bfmu$.
To deal with the linear inequality constraints, we introduce slack variables $\bfs \in \R^{m_i}$, yielding the equivalent problem
\begin{equation}\label{eq:QPslack}
	\min_{\bff,\bfs} \thf \bff^\top \bfQ(\bfp) \bff + \bfb(\bfp)^\top \bff
	\quad\text{ subject to }\quad \bfC_\e \bff - \bfc_\e = \bfzero, \quad \bfC_\i \bff - \bfc_\i - \bfs = \bfzero,\quad \bfs \geq \bfzero.
\end{equation}
This is a convex quadratic optimization problem, whose objective function is strictly convex if ${\rm null}(\bfM(\bfp)) \cap {\rm null}(\bfL) = \emptyset$.  The Lagrangian is given by
\begin{equation}\label{eq:Lag}
\mathcal{L}(\bff, \bflambda_\e,\bfs, \bflambda_\i) = \thf \bff^\top \bfQ(\bfp) \bff + \bfb(\bfp)^\top \bff - \bflambda_\e^\top (\bfC_\e \bff - \bfc_\e) - \bflambda_\i^\top (\bfC_\i \bff - \bfc_\i - \bfs).
\end{equation}
Necessary and sufficient conditions for a global minimizer are the KKT conditions. Here, we consider the perturbed conditions for some centrality parameter $\delta \geq 0$,
\begin{equation}\label{eq:KKTCond}
	F(\bff,\bflambda_\e,\bfs,\bflambda_\i,\delta,\mu) = \left[
		\begin{array}{c}
			\bfQ(\bfp) \bff + \bfb(\bfp) - \bfC_\e^\top \bflambda_\e - \bfC_\i^\top \bflambda_\i \\
			\bfC_\e \bff - \bfc_\e \\
			\bfC_\i \bff - \bfc_\i - \bfs\\
			\bfS \bfLambda_\i \bfe - \delta \mu \bfe
		\end{array}
	\right] = \bfzero,\quad  \quad  \bflambda_\i, \bfs \geq \bfzero.
\end{equation}
Here, $\bfS = \diag(\bfs)$, $\bfLambda_\i = \diag(\bflambda_\i)$ and $\bfe \in \R^{m_i}$ is a vector of all ones,
and $\mu = \frac{\bfs^\top \bflambda_\i}{m_i}$ is a complementarity measure (it is zero at a KKT point). In interior point methods, the goal is to iteratively approximate a root of $F$ using Newton's method and some line search that ensures strict positivity of the slack and the Lagrange multiplier associated with the inequality constraints (i.e., $\bfs > \bfzero$ and $\bflambda_\i > \bfzero$).

At the $k$-th iteration of the interior point method, we denote the current iterates as $(\bff^k, \bflambda_\e^k,  \bfs^k, \bflambda_\i^k)$, linearize the optimality conditions~\eqref{eq:KKTCond}, and solve the linear system
\begin{equation}\label{eq:KKTnewton}
	\left[
	\begin{array}{c|c|c|c}
		\vdots & \vdots & \vdots & \vdots \\
		\nabla_{\bff} F & 	\nabla_{\bflambda_\e} F & \nabla_{\bfs} F & \nabla_{\bflambda_\i} F\\
		\vdots & \vdots & \vdots & \vdots
	\end{array}
	\right]	\left[
	\begin{array}{c}
		\Delta \bff \\ \Delta \bflambda_\e \\ \Delta \bfs \\ \Delta \bflambda_\i
	\end{array}
	\right]
	=
	\left[
	\begin{array}{c}
		- \bfr_d(\bfp)\\
		- \bfr_\e \\
		- \bfr_\i \\
		- \bfS \bfLambda_\i \bfe + \delta \mu \bfe
	\end{array}
	\right],
\end{equation}
where
\begin{equation}\label{eq:gradF}
	\left[
	\begin{array}{c|c|c|c}
		\vdots & \vdots & \vdots & \vdots \\
		\nabla_{\bff} F & 	\nabla_{\bflambda_\e} F & \nabla_{\bfs} F & \nabla_{\bflambda_\i} F\\
		\vdots & \vdots & \vdots & \vdots
	\end{array}
	\right]
   =
	\left[
		\begin{array}{cccc}
			\bfQ(\bfp) &  -\bfC_\e\t & \bfzero         & -\bfC_\i\t \\
			\bfC_\e     &  \bfzero       &  \bfzero       &      \bfzero  \\
			\bfC_\i     & \bfzero        & -\bfI_{m_i}     & \bfzero       \\
			\bfzero          &  \bfzero       & \bfLambda_\i & \bfS
		\end{array}
	\right].
\end{equation}
Here, the dual residual is
\begin{equation*}
	\bfr_d(\bfp) = \bfQ(\bfp) \bff^k + \bfb(\bfp) - \bfC_\e^\top \bflambda_\e^k - \bfC_\i^\top \bflambda_\i^k
\end{equation*}
and the primal residuals for the equality and inequality constraints are, respectively,
\begin{equation*}
	 \bfr_\e = \bfC_\e \bff^k - \bfc_\e\quad  \text{ and }\quad \bfr_\i = \bfC_\i \bff^k - \bfc_\i - \bfs^k.
\end{equation*}

The crucial components of the primal dual interior point method include an efficient linear solver for~\eqref{eq:KKTnewton}, the step length selection (here we use the largest step size in $[0,1)$ that ensures that both $\bflambda_\i$ and $\bfs$ remain sufficiently far from the boundary), and the choice of the centrality parameter $\delta$. For the latter, we use Mehrotra's predictor-corrector approach as described in~\cite{NocedalWright2006}. First, in the predictor step, we compute an \emph{affine scaling step} (i.e., we solve~\eqref{eq:KKTnewton} for $\delta = 0$) and perform a line search. Then, in the corrector step, we compute the final direction by solving~\eqref{eq:KKTnewton} for
\begin{equation*}
	\delta = \left( \frac{(\bff + \alpha^{\rm aff} \Delta \bff^{\rm aff})^\top (\bfs + \alpha^{\rm aff} \Delta \bfs^{\rm aff})}{m_e \mu }\right)^3.
\end{equation*}
Thus, each iteration requires two linear solves.

\paragraph{Computing sensitivities}
In order to enable fast optimization of the design parameters (i.e., optimization for the outer problem), we need to differentiate the solutions of the quadratic program~\eqref{eq:QPslack} with respect to the design parameters. To do this, we use implicit differentiation of the optimality condition~\eqref{eq:KKTCond}. Let $\widehat\bff(\bfp)$ be the computed solution to the quadratic programming problem. Then, we are interested in computing the sensitivity matrix $\bfJ_{\widehat\bff} (\bfp) \in \R^{n \times \ell}$ such that
\begin{equation*}
	\widehat \bff(\bfp+ \Delta \bfp) = \widehat \bff(\bfp) + \bfJ_{\widehat \bff}(\bfp) \Delta \bfp + \mathcal{O}(\| \Delta \bfp\|^2)
\end{equation*}
for all $\Delta \bfp \in \R^\ell$.

To this end, we differentiate both sides of \eqref{eq:KKTCond} around the current KKT point $(\widehat\bff(\bfp),\bflambda_\e(\bfp),\bfs(\bfp), \bflambda_\i(\bfp))$ with respect to $\bfp$ and obtain
\begin{align*}
	\bf0 & = \nabla_\bfp F(\widehat\bff(\bfp), \bflambda_\e(\bfp),\bfs(\bfp),\bflambda_\i(\bfp),\delta,\mu)\\
	& = \nabla_{\bfp} F(\widehat\bff(\bfp),\bflambda_\e, \bfs, \bflambda_\i,\delta, \mu) + \left[
	\begin{array}{c|c|c|c}
		\vdots & \vdots & \vdots & \vdots \\
		\nabla_{\widehat \bff} F & 	\nabla_{\bflambda_\e} F & \nabla_{\bfs} F & \nabla_{\bflambda_\i} F\\
		\vdots & \vdots & \vdots & \vdots
	\end{array}
	\right]	\left[
	\begin{array}{c}
		\bfJ_{\widehat\bff}(\bfp) \\ \bfJ_{\bflambda_\e}(\bfp) \\ \bfJ_{\bfs}(\bfp) \\ \bfJ_{\bflambda_\i}(\bfp)
	\end{array}
	\right].
\end{align*}
Assuming that the linear system in~\eqref{eq:gradF} is invertible (that is, there is a unique KKT point) we obtain
\begin{equation*}
\begin{bmatrix}
  	\bfJ_{\widehat\bff}(\bfp) \\ \bfJ_{\bflambda_\e}(\bfp) \\ \bfJ_{\bfs}(\bfp) \\ \bfJ_{\bflambda_\i}(\bfp)
    \end{bmatrix}
	  = -
    \begin{bmatrix}
			\bfQ(\bfp) &  -\bfC_\e\t & \bf0         & -\bfC_\i\t \\
			\bfC_\e     &  \bf0       &  \bf0        &      \bf0  \\
			\bfC_\i     & \bf0        & -\bfI_{m_i}     & \bf0       \\
			\bf0          &  \bf0       & \bfLambda_\i & \bfS
\end{bmatrix}^{-1}
	\begin{bmatrix} \nabla_{\bfp}(\bfQ(\bfp) \widehat \bff(\bfp) + \bfb(\bfp)) \\ \bf0 \\ \bf0 \\ \bf0
	\end{bmatrix}.
\end{equation*}
In the case of OED Problem A with $\bfE(\bfp) = \bfI_\ell$ we have
\begin{equation*}
	\nabla_{\bfp}(\bfQ(\bfp) \bff + \bfb(\bfp)) = 2 \begin{bmatrix} \bfR(\theta_1)\t \bfT\t &  & \\
  & \ddots & \\ & &  \bfR(\theta_\ell)\t \bfT\t
  \end{bmatrix} (\bfM(\bfp)\bff - \bfd(\bfp)).
\end{equation*}

Computing the complete sensitivity matrix of interest $\bfJ_{\widehat\bff}$ requires $\ell$ linear solves and thus might be infeasible when $\ell \gg 1$. Thus, as is also common in PDE parameter estimation~\cite{Haber2015}, we  provide matrix-free implementations that compute matrix vector products  $\bfv \mapsto \bfJ_{\widehat\bff}\bfv $ and  $\bfw \mapsto \bfJ_{\widehat \bff}^\top \bfw $ at the cost of one linear solve per training sample.

For some applications and in particular for OED Problem B, it may be beneficial to re-parameterize the angles contained in $\bfp$ by taking a reference angle $\delta_ 1 = p_1$ and non-negative increments $\delta_j\geq 0 $, $j = 2,\dots, n$ such that $p_k = \sum_{j = 1}^k \delta_j$.  This results in the following additional constraints
\begin{equation*}
	\delta_1 -a \geq 0, \quad
	\delta_j    \geq 0  \qquad \text{for } j = 2,\dots, n, \quad \mbox{ and } \quad
	b - \sum_{j = 1}^n \delta_j \geq 0,
\end{equation*}
where $a$ is the lower and $b$ the upper bound for the angles and derivatives are given by $$
\frac{\partial \bfp}{\partial \bfdelta} =
\begin{bmatrix}
	1      & 0 &      0 & \ldots & 0 \\
	1      & 1 &      0 & \ldots & 0 \\
	\vdots &   & \ddots &        & \vdots \\
	1      & 1 &      1 & \ldots & 1 \\
\end{bmatrix}\,.
$$

\section{Numerical Experiments} % (fold)
\label{sec:numerical_experiments}
In this section, we provide various examples for both OED problems A and B, using the tomography reconstruction problems described in Section~\ref{sec:problem_setup}.   We consider various assumptions (including different training data sets) and investigate how different constraints on the imaging state problem may affect the optimal design parameters.

\subsection{Implementation} % (fold)
\label{sub:implementation}
Our numerical framework for minimizing the empirical Bayes risk is implemented as an add-on to the parameter estimation package \textsc{jInv}~\cite{RuthottoTreisterHaber2017}. To benefit from \textsc{jInv}'s existing capabilities, the routines for solving the lower-level problem and computing sensitivities are implemented as a module, extending the abstract forward problem type. In particular, this allows parallel and distributed-memory evaluation of the constrained inverse problems for different training data. The design problem is formulated and solved using the misfit, regularization, and optimization methods provided in \textsc{jInv}'s. Our module will be made freely available. %upon acceptance of this article.

% subsection implementation (end)

\subsection{Data sets and Constraints} % (fold)
\label{sub:trainingdata}
We consider four sets of training data shown in Figure~\ref{fig:testData} and described below.
To obtain each projection, we use model~\eqref{eq:forwardmodel} with parallel rays where the number of rays is equal to the pixel size.

\begin{description}
	\item[Rectangles:] The data set consists of $20$ binary images of size $40\times 40$ that show randomly spaced and sized rectangles. The rectangles' edges are aligned with the coordinate axes. Note that given the knowledge on the binary constraints, exact reconstruction is possible using two measurements with angles of $0$ and $90$ degrees, respectively.

	\item[Pentagons:]
	A second data set consists of binary images of pentagons. As before, the training data consists of $20$ discrete images with $40\times 40$ pixels. The location and the size of the pentagons change randomly from image to image; however, the angles between the edges and the coordinate axes are the same. Thus, the object can be reconstructed exactly with appropriate constraints and from five projection angles.

	\item[Shapes:]
	As a non-binary example we consider a synthetically generated dataset containing $20$ discrete images of size $40\times 40$ that are obtained by evaluating a random smooth function on a randomly chosen supporting set.

	\item[Phantom:] As a more realistic example, we generate a training data set consisting of $20$ gray valued images resembling random variations of the Modified Shepp-Logan phantom \cite{jain1989fundamentals,shepp1974fourier}. The Shepp-Logan phantom resembles basic head characteristics: exterior, skull, left and right ventricles, as well as tumors. We randomly varied head features, such as skull size, head \& ventricle size and orientation, intensity, and number of tumors. Our Matlab implementation will be publicly available.  Here, we discretize the data on a $64\times64$ regular grid.
\end{description}
\begin{figure}[bthp]
	\begin{center}
		\iwidth=50mm
		\begin{tabular}{@{}cc@{}}
			\includegraphics[width=\iwidth]{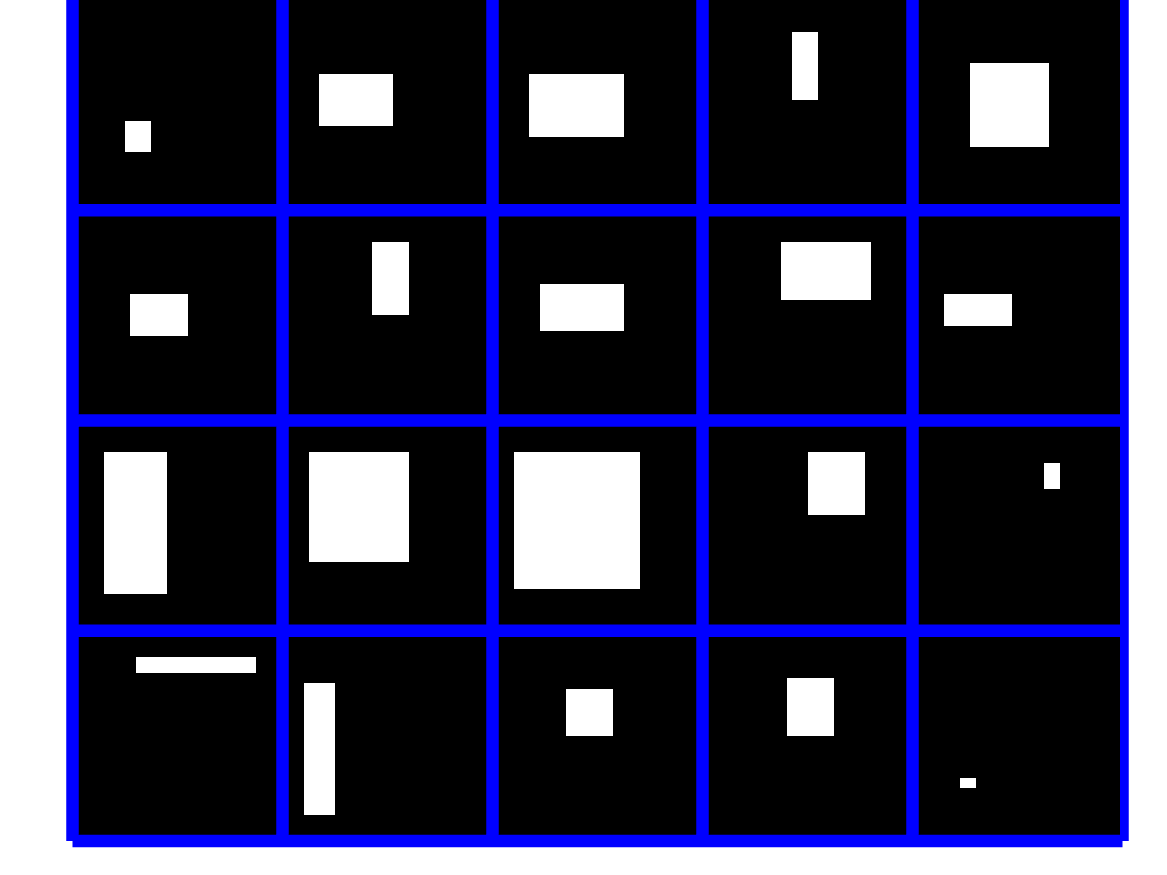}
			&
			\includegraphics[width=\iwidth]{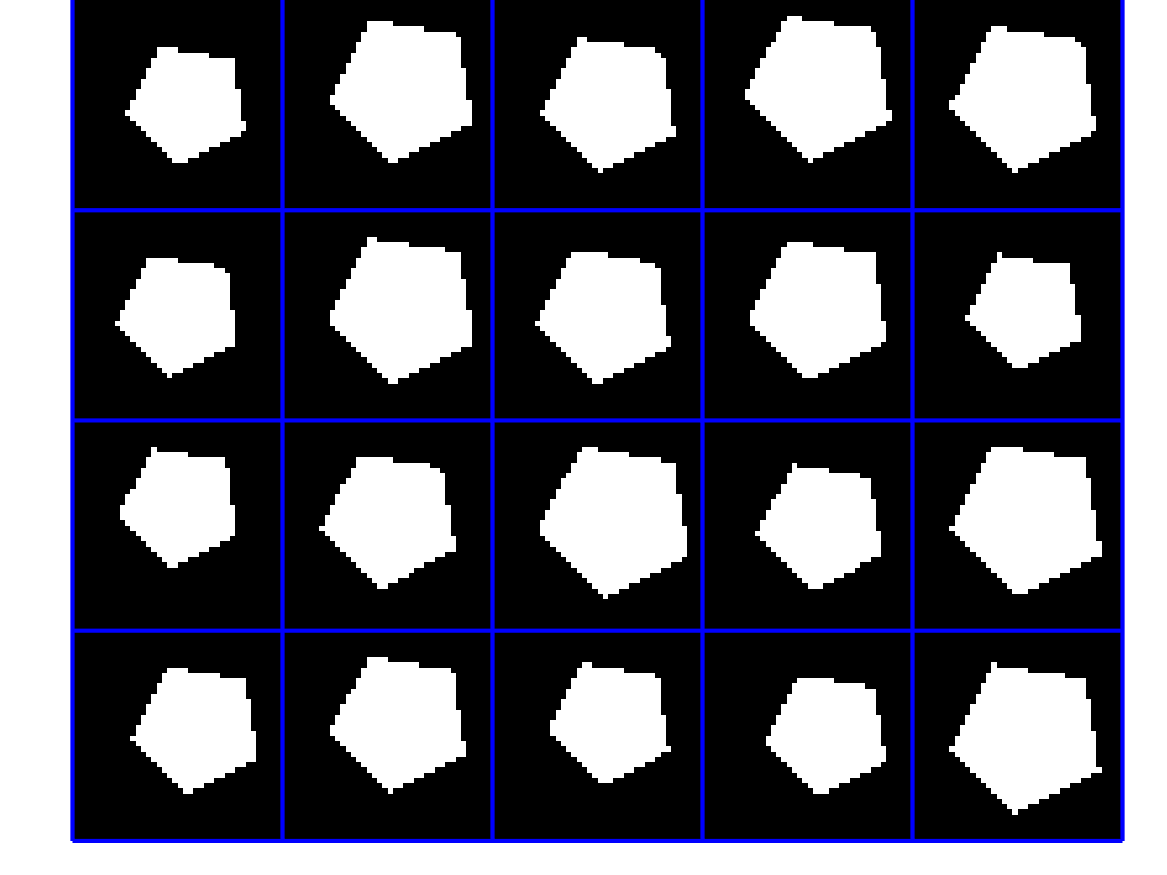}\\
			1) rectangles
			&
			2) pentagons\\[2ex]
			\includegraphics[width=\iwidth]{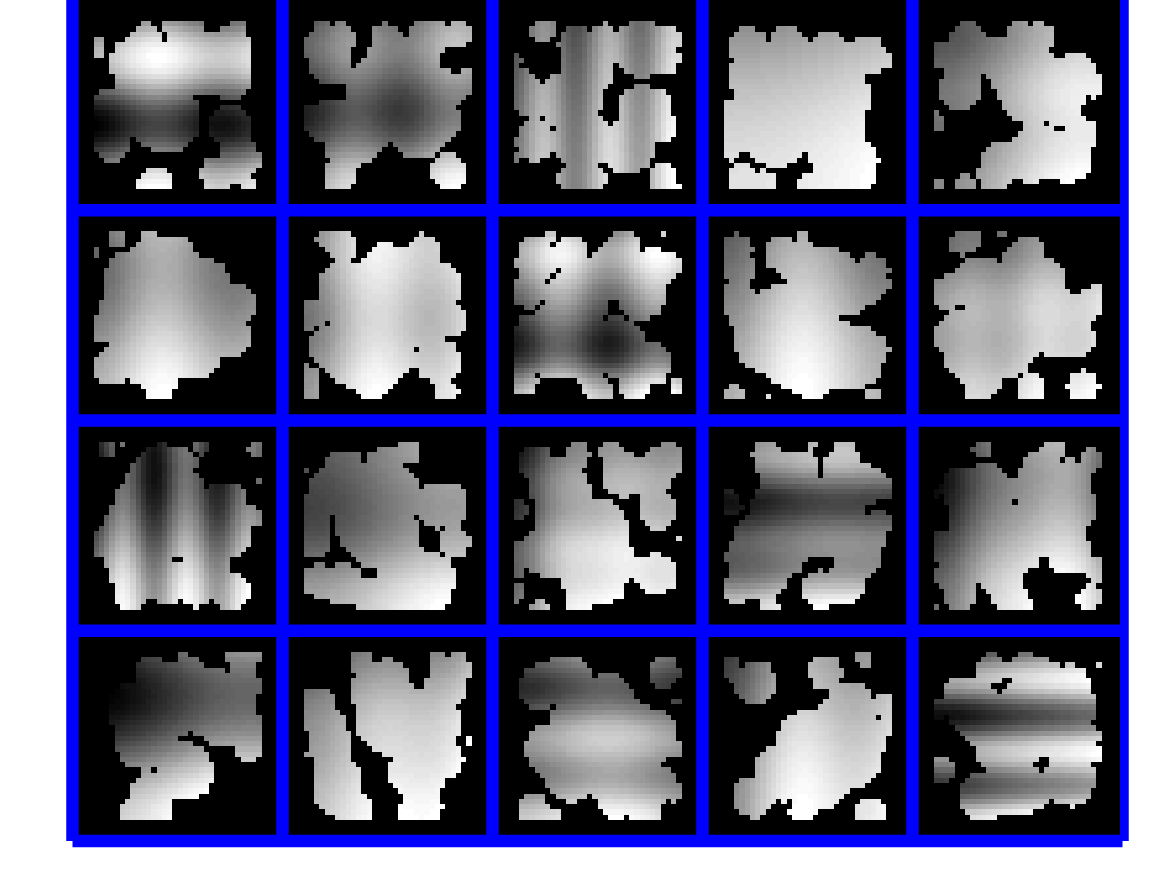}
			&
			\includegraphics[width=\iwidth]{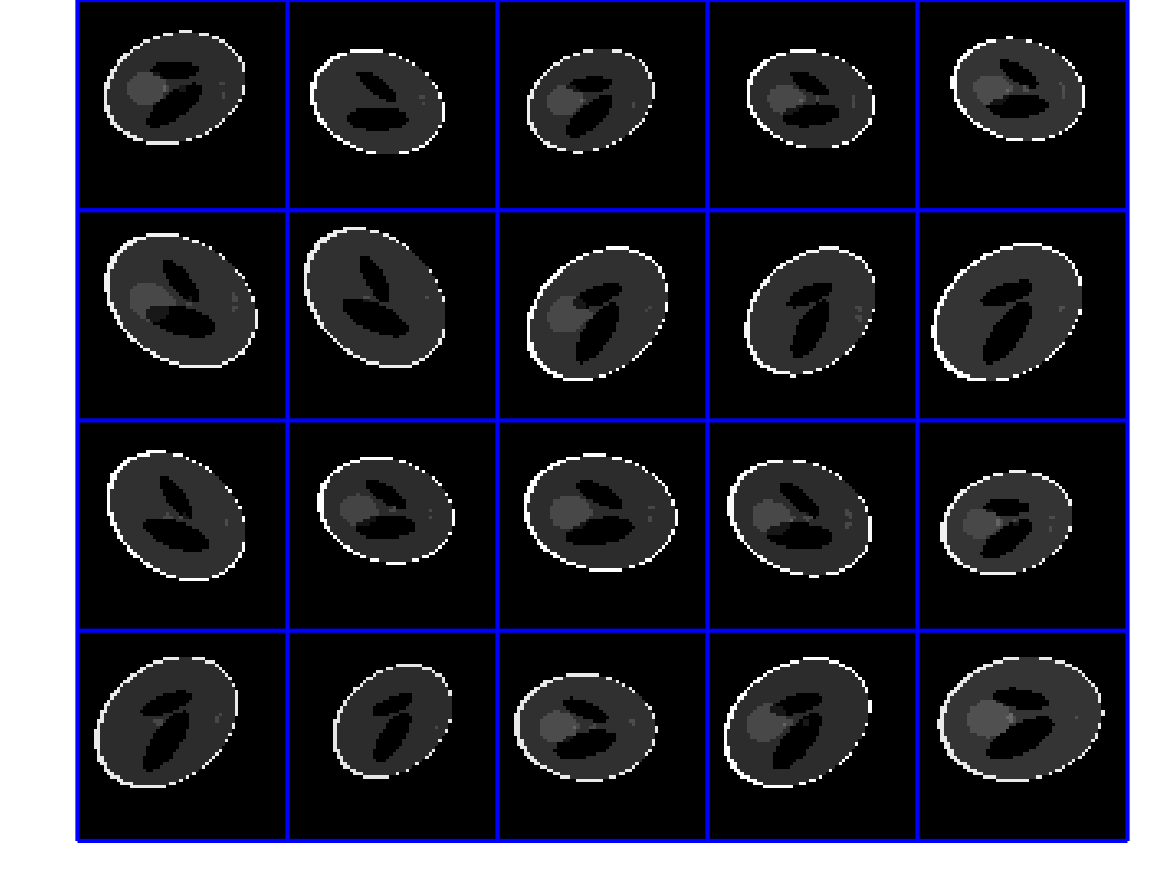}
			\\
			3) shapes
			&
			4) phantom
		\end{tabular}
	\end{center}
	\caption{Simulated training data used in the numerical experiments. Each data set consists of 20 images with intensity values between $0$ (black) and $1$ (white). The blue lines divide the plot into the individual examples.}
	\label{fig:testData}
\end{figure}
These data sets can be used directly in empirical Bayes risk minimization OED problems for approximating the expected value, as discussed in Section~\ref{sec:problemformulation}.  Furthermore, we can treat the images as samples from some underlying distributions and either use a prior assumption on the covariance $\bfGamma_\bff$ or a sample approximation to solve the Bayes OED problems.

% \subsection{Constraints on Imaging Problem} % (fold)
% \label{sub:constraints_on_imaging_problem}
The main interest of this work is to investigate how different constraints on the imaging problem may affect the optimal design parameters. Therefore, we consider four common choices of constraints in~\eqref{eq:ip}.
\begin{description}
	\item[unconstrained:] No constraints are imposed on the discrete image, $\bff$. In this case the optimality condition in~\eqref{eq:ip} is a linear system with a positive definite matrix and (for small-scale problems) is solved using a Cholesky factorization. For this case, we compare the optimal Bayes design with the optimal empirical Bayes design.

	\item[equality constraints:] We assume that the sum of the intensity values equals a known constant. To this end, we set $\bfC_\e = \begin{bmatrix} 1, & \ldots ,& 1\end{bmatrix} \in \R^{1 \times n}$ and $\bfc_\e = \bfC_\e \bff_{\rm true}$. Similar to the previous case the optimality condition~\eqref{eq:ip} is linear and requires solving a saddle point problem. To this end, we use an LU factorization.

	\item[non-negativity:] A physically meaningful constraint in many imaging problems is to enforce reconstructed intensity values to be non-negative. In our formulation we use $\bfC_\i = \bfI_n \in \R^{n\times n}$ and $\bfc_\i = \bfzero$.

	\item[bound constraints:] In addition to non-negativity, we enforce an upper bound and restrict the intensity values of each pixel to be between $0$ and $1$ by using $\bfC_\i = [\bfI_n; -\bfI_n] \in \bbR^{2n\times n}$ and $\bfc_\i = [\bfzero; -\bfe] \in \bbR^{2n}$.
\end{description}
Since the most plausible choice of a constraint will depend on the particular application our framework supports a variety of constraints.  Due to the relative simplicity of OED problem B (e.g., no additional design regularization and potentially much fewer parameters to optimize), we begin with some investigations on the impact of lower level constraints on the overall optimal experimental design.

% subsection constraints_on_imaging_problem (end)
\subsection{OED Results for Problem B} % (fold)
\label{sub:comparison_of_objective_functions}
For simplicity and for visualization purposes, we consider OED problem B with $\ell=2$; that is, we aim to find the $2$ projection angles, where the resulting reconstructions minimize the mean-squared error.  Note that computing reconstructions in this case is highly under-determined.

We first investigate the OED objective function $\calJ(\bfp)$ for projection angles $\bfp \in [0,180]^2$ in intervals of $1$ degree, assuming $p_1 \geq p_2$.
Note that no regularization is included in the outer optimization problem of OED problem B, and thus the objective function measures the mean squared error of the reconstruction.  We begin with no constraints on the inner problem and provide the Bayes risk values in Figure~\ref{fig:Bayes} for various covariance matrices where the underlying images are $64 \times 64$ pixels.  First, we provide the Bayes risk for $\bfL = \bfI_{4096}$, where the minimum occurs around angles $45$ and $135$ degrees.  Then for both the rectangles and phantoms data sets, we generated 1,000,000 sample images and computed the sample covariance matrices (with a small regularization to ensure positive definiteness).  Using the computed factor $\bfL$, we computed Bayes risks and provide them along with their corresponding minima in Figure~\ref{fig:Bayes}. In general, we see that $p_1=p_2$ as well as $p_1=0, p_2=180$ correspond to higher Bayes risk, which seems intuitive for tomography.  However, it is evident that the choice of $\bfL$ does affect the optimal design.

\begin{figure}[t]
	\begin{center}
		\begin{tabular}{@{}c@{}c@{}c@{}}
		$\bfL = \bfI$ & rectangles & phantom\\
\includegraphics[width=.33\textwidth]{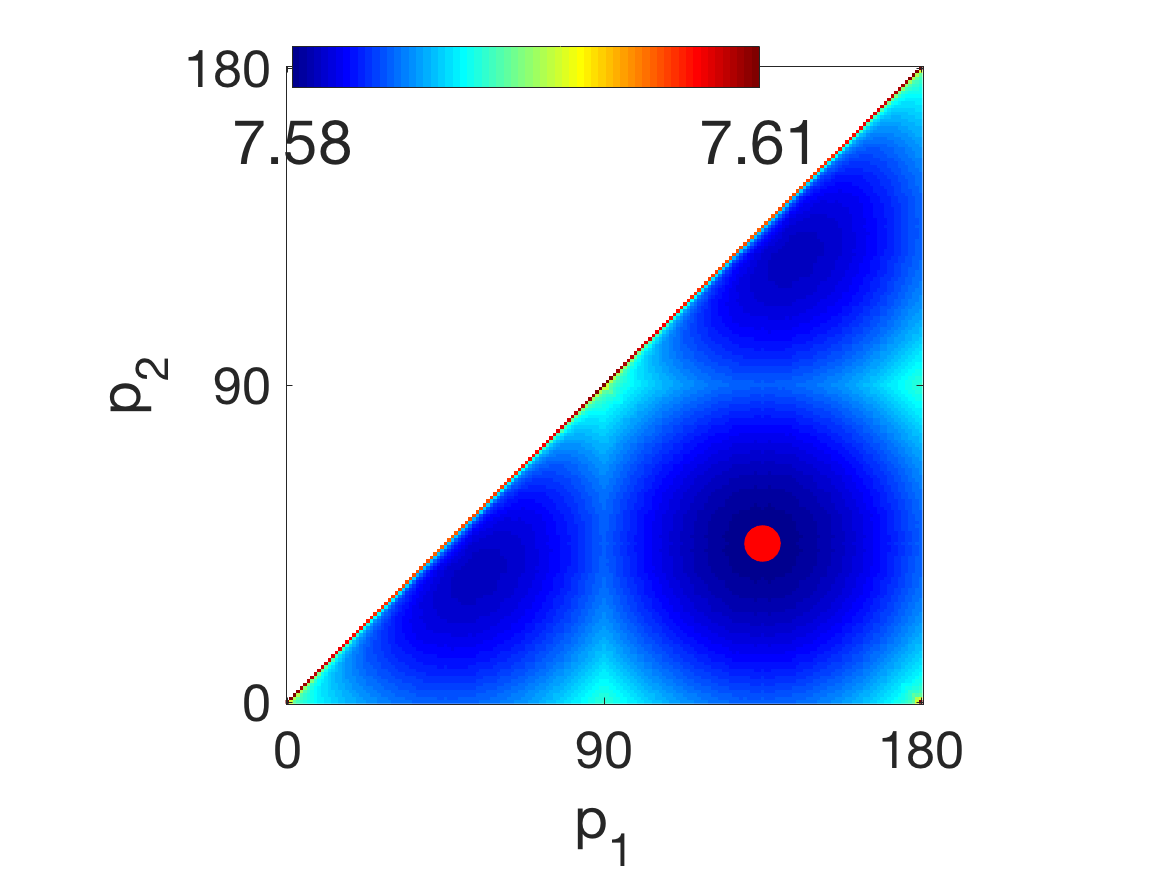}	&
\includegraphics[width=.33\textwidth]{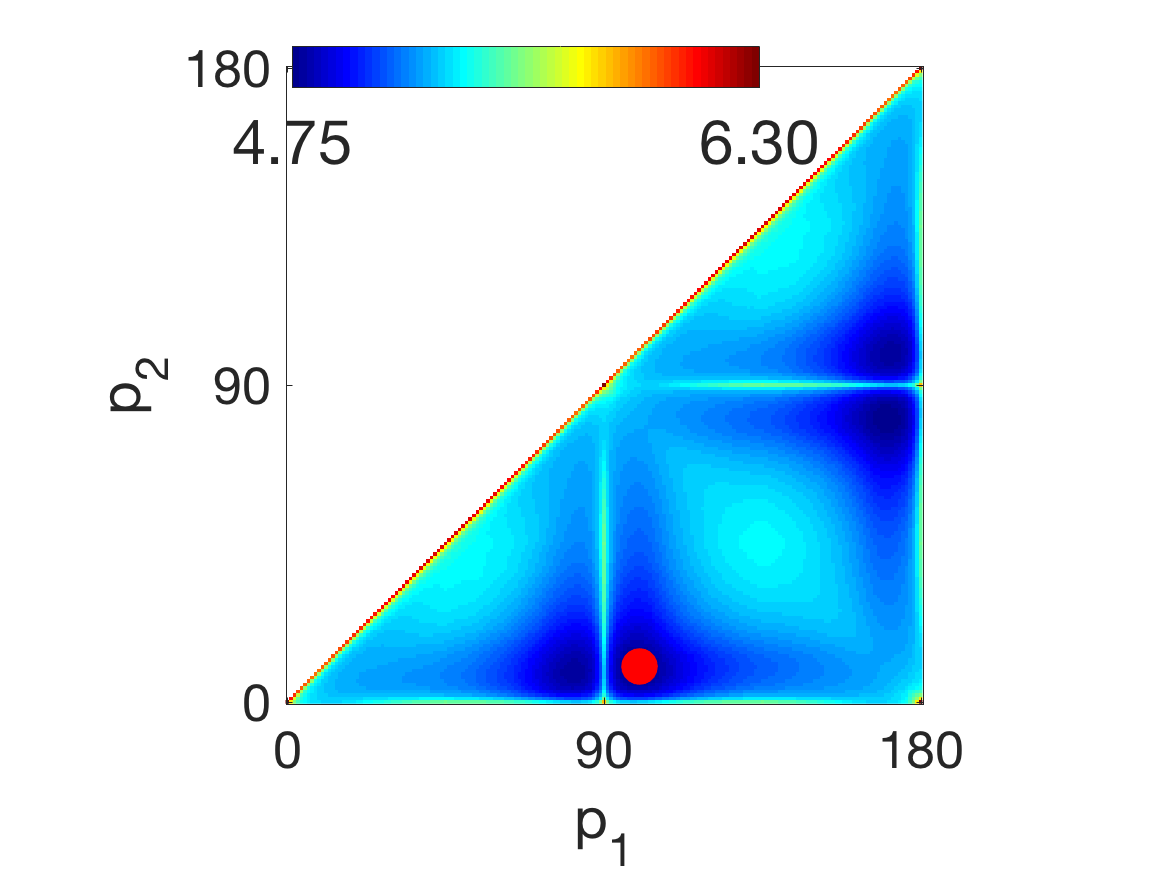}	&
\includegraphics[width=.33\textwidth]{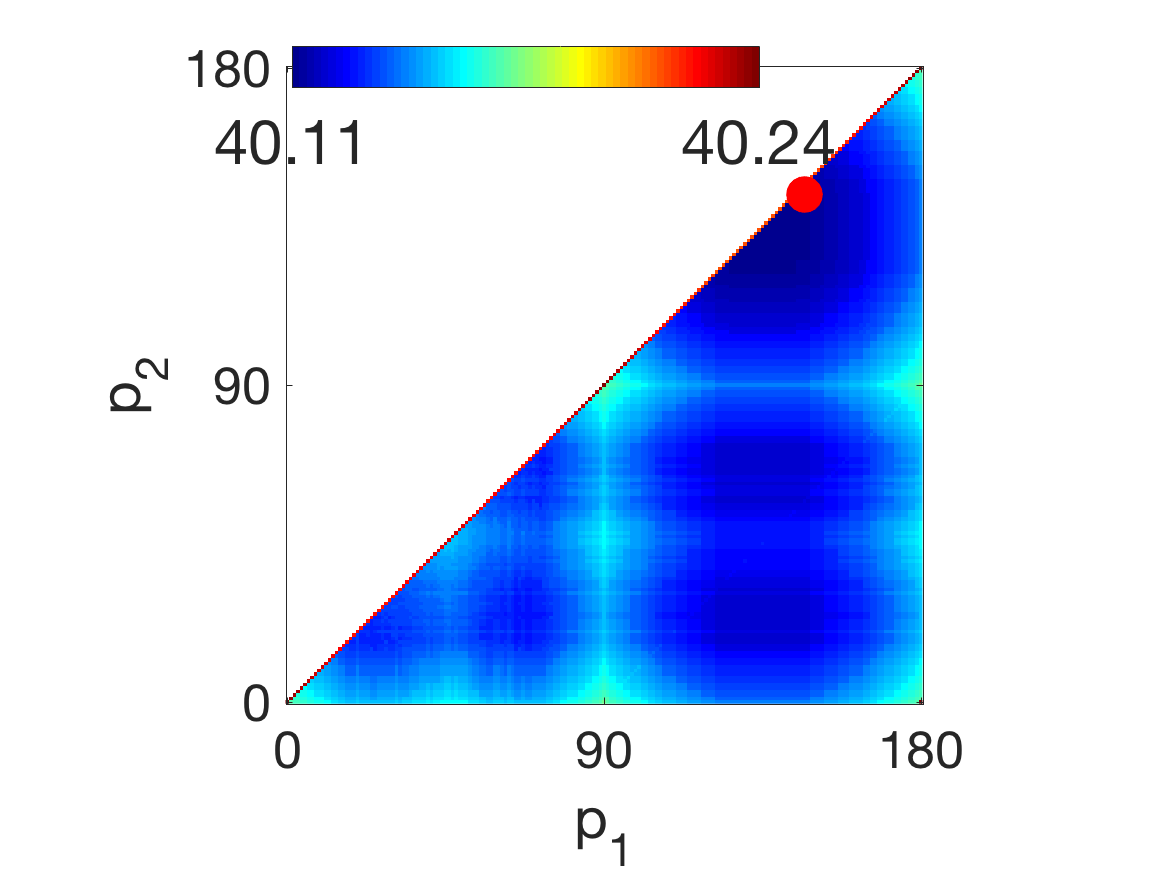}
		\end{tabular}
	\end{center}
	\caption{Bayes risk for OED Problem B with two projection angles $p_1$ and $p_2$ for $\bfL$ being the identity matrix and for $\bfL$ coming from covariance matrices obtained from realizations of the rectangles and the phantoms datasets.  Red dots correspond to minimum values. These results correspond to the unconstrained state problem with $\alpha =1$ and $\sigma = 1$. }
	\label{fig:Bayes}
\end{figure}

Next we provide the \emph{empirical} Bayes risk (i.e., values of the sampled objective function) for all four training data sets and four options for constraints in Figure~\ref{fig:OEDB_plotObjective}. The two best designs are marked with red dots. For all training data, the design improvement is most pronounced for the box-constrained lower-level problem. Further, for the pentagon example, the global optima are obtained at different points for unconstrained/equality constrained and the inequality constrained problems.
\begin{figure}[t]
	\renewcommand{\rottext}[1]{\rotatebox{90}{\hbox to 30mm{\hss #1\hss}}}
	\newcommand{\image}[1]{\includegraphics[width=31mm,trim=50 0 60 0,clip=true]{#1}}
	\begin{center}
		\begin{tabular}{@{}c@{}c@{}c@{}c@{}c@{}}
			& unconstrained & equality & non-negativity & box\\
		\rottext{rectangles}
		 & \image{OED_rect_unconstrained}
		 & \image{OED_rect_eqconstrained}
		 & \image{OED_rect_nonneg}
		 & \image{OED_rect_box}\\
		\rottext{pentagon}
		 & \image{OED_pent_unconstrained}
		 & \image{OED_pent_eqconstrained}
		 & \image{OED_pent_nonneg}
		 & \image{OED_pent_box}\\
		\rottext{shapes}
		 & \image{OED_blobs_unconstrained}
		 & \image{OED_blobs_eqconstrained}
		 & \image{OED_blobs_nonneg}
		 & \image{OED_blobs_box}\\
		\rottext{phantom}
		 & \image{OED_shepp_unconstrained}
		 & \image{OED_shepp_eqconstrained}
		 & \image{OED_shepp_nonneg}
		 & \image{OED_shepp_box}\\
		\end{tabular}
	\end{center}
	\caption{Mean squared reconstruction error for OED Problem B with two projection angles plotted along the $x$ and $y$ axis. For each data set we show the reduction of MSE for the unconstrained, equality constrained, non-negativity constrained and box constrained OED problem. The two best designs are indicated by red dots. To allow for comparison, the same color axis is in the sub plots of each row.  It can be seen that a considerably larger reduction of the reconstruction error is achieved for the constrained problems. }
	\label{fig:OEDB_plotObjective}
\end{figure}

We investigate the impact of the value of the regularization parameter, $\alpha$, on the reconstruction error for the optimal designs identified in the previous steps. To this end, we compute the MSE for 20 logarithmically equal spaced values of $\alpha$ between $10^{-4}$ and $10^3$ using the optimal projection angles determined in the previous step. We see that for the unconstrained and equality constrained problem, the MSE is less sensitive to the choice of $\alpha$ and thus the global minima are difficult to identify.  For the inequality constrained problems substantial improvement of the design can be obtained.  For all datasets, we found that thee box constraints resulted in smaller MSE values overall.  Since a good choice of $\alpha$ may not be available a priori, one could consider an approach that incorporates $\alpha$ as a design parameter, so that optimal design also includes optimizing for $\alpha$.

\begin{figure}[t]
	\begin{center}
		\includegraphics[width=\textwidth]{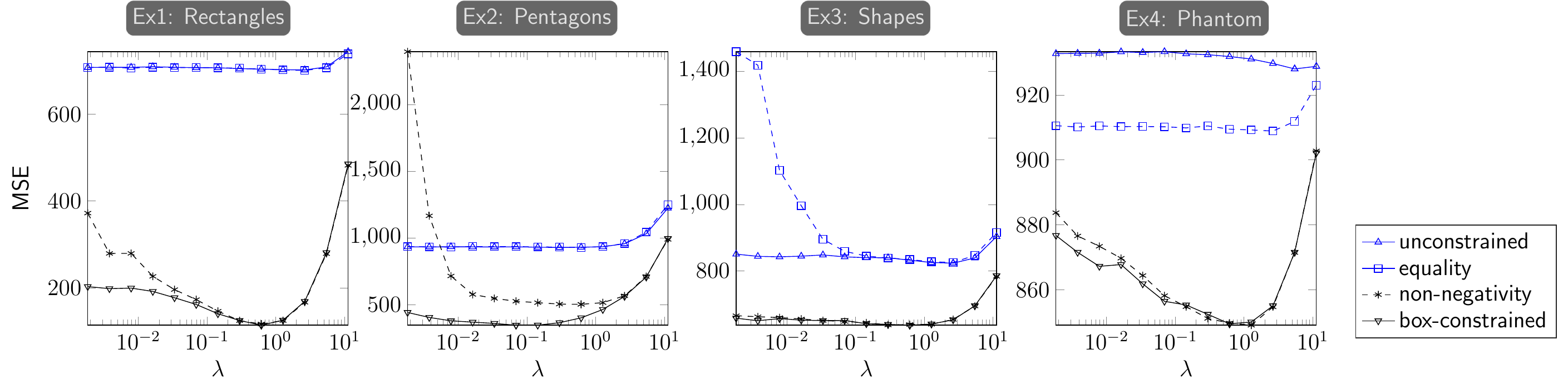}
	\end{center}
	\caption{Investigation on the impact of regularization parameter on MSE for different choices of constraints and different data sets. }\label{fig:plotObjectiveBLambda}
\end{figure}

\subsection{OED Results for Problem A} % (fold)
\label{sub:solution_approach_for_oed_problem_a}
Given the training data and a fine discretization of the tomography operator, we generate data, by evaluating the forward problem and adding 0.1\% Gaussian white noise. Then, the OED problem is solved twice. First, we aim at eliminating the number of rows in $\bfA$ by solving the OED problem A with an $\ell_1$-regularizer on the measurement parameters. Second, we adjust the weights of the non-zero weights by re-solving the OED problem without regularization. This procedure resembles the method introduced in~\cite{HaberEtAl2008OED}.

We first investigate the impact of the sparsity parameter $\beta$ on the design.  In the top row of Figure~\ref{fig:OEDA_beta}, we provide the number of projections $\ell$  for various values of $\beta$, and as expected, we see that with a larger sparsity parameter, we obtain fewer projections.  The more interesting result is in the second row where we provide the MSE as a function of $\beta.$  Here we see that even with fewer projections, reconstructions obtained by imposing constraints correspond to smaller MSE values.
\begin{figure}[t]
	\begin{center}
		\includegraphics[width=\textwidth]{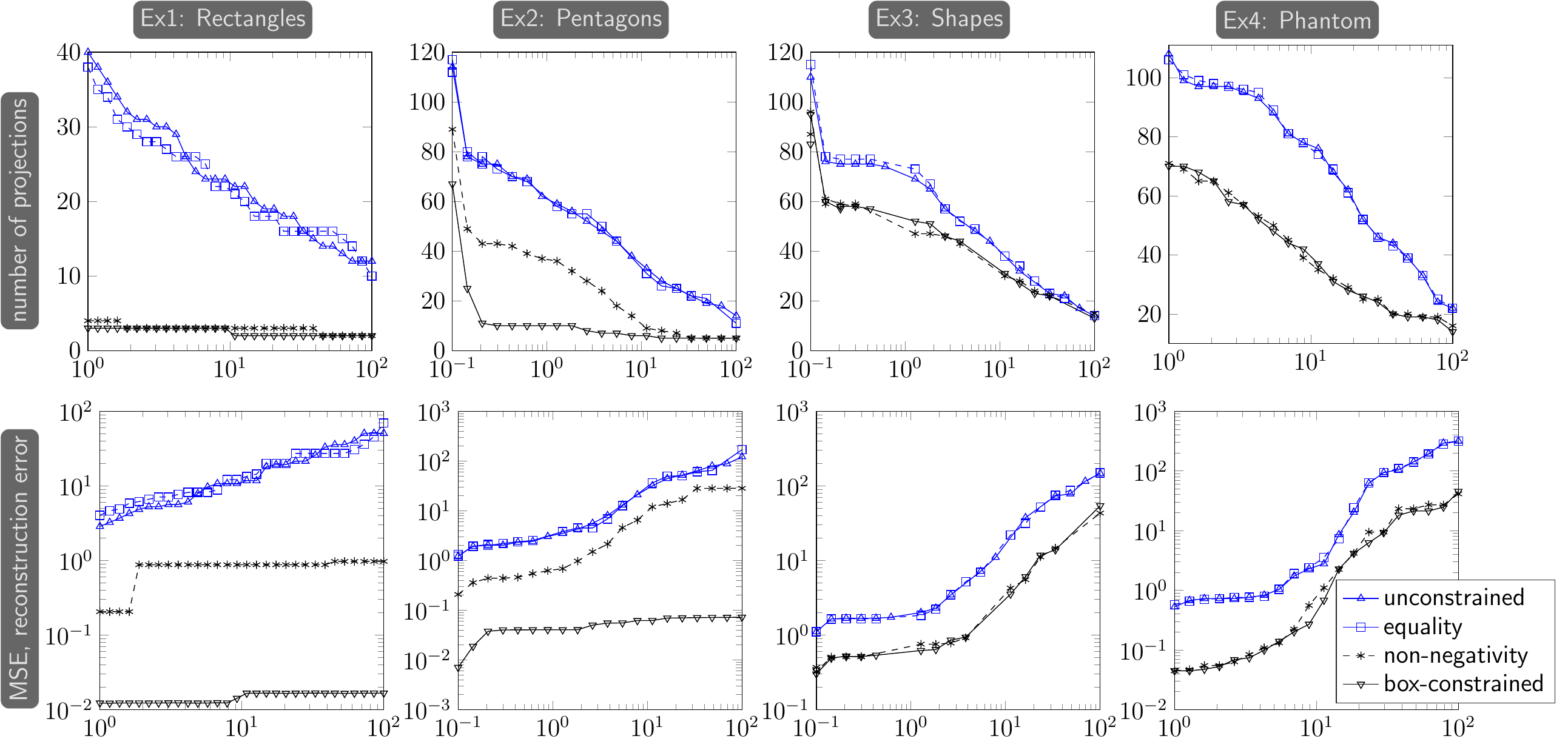}
	\end{center}
	\caption{Results for OED problem A using the four test data sets for the tomography problem (column wise). First row depicts the number of projection for the optimal design in dependence on the sparsity parameter $\beta$. The second row shows the optimal mean squared error (MSE).}
	\label{fig:OEDA_beta}
\end{figure}

In Figures~\ref{fig:OEDA_binary} and~\ref{fig:OEDA_blobs}, we provide four sample reconstructions and error images for each dataset and constraint. We observe that, overall, fewer projections are required and smaller reconstruction errors are possible if box or non-negativity constraints are included on the lower problem.
This distinction is most prominent with the datasets of binary images, where only a few projections are needed.

Intuitively, the optimal angles for the rectangle images should be 0 and 90 degrees and the optimal angles for the pentagon images should be 27, 63, 99, 135, and 171 degrees.  This is because the training images all share the same orientation, and angles orthogonal to the edges may be considered optimal (see Figure~\ref{fig:testData}). In Figure~\ref{fig:OEDA_binary}, we see that for the rectangles, the non-negative and box-constrained OED parameters are $\hat\bfp = [0,90]\t$ and $\hat\bfp = [0, 90]\t$  degrees respectively, and for the pentagons, $\hat\bfp = [25, 64, 99, 136, 171]\t$  and $\hat\bfp = [27, 62, 99, 134, 171]\t$ degrees respectively. These results illustrate the benefits of incorporating proper constraints to improve the optimal experimental design.

\begin{figure}
	\begin{center}
		\begin{center}
			\includegraphics[width=.9\textwidth]{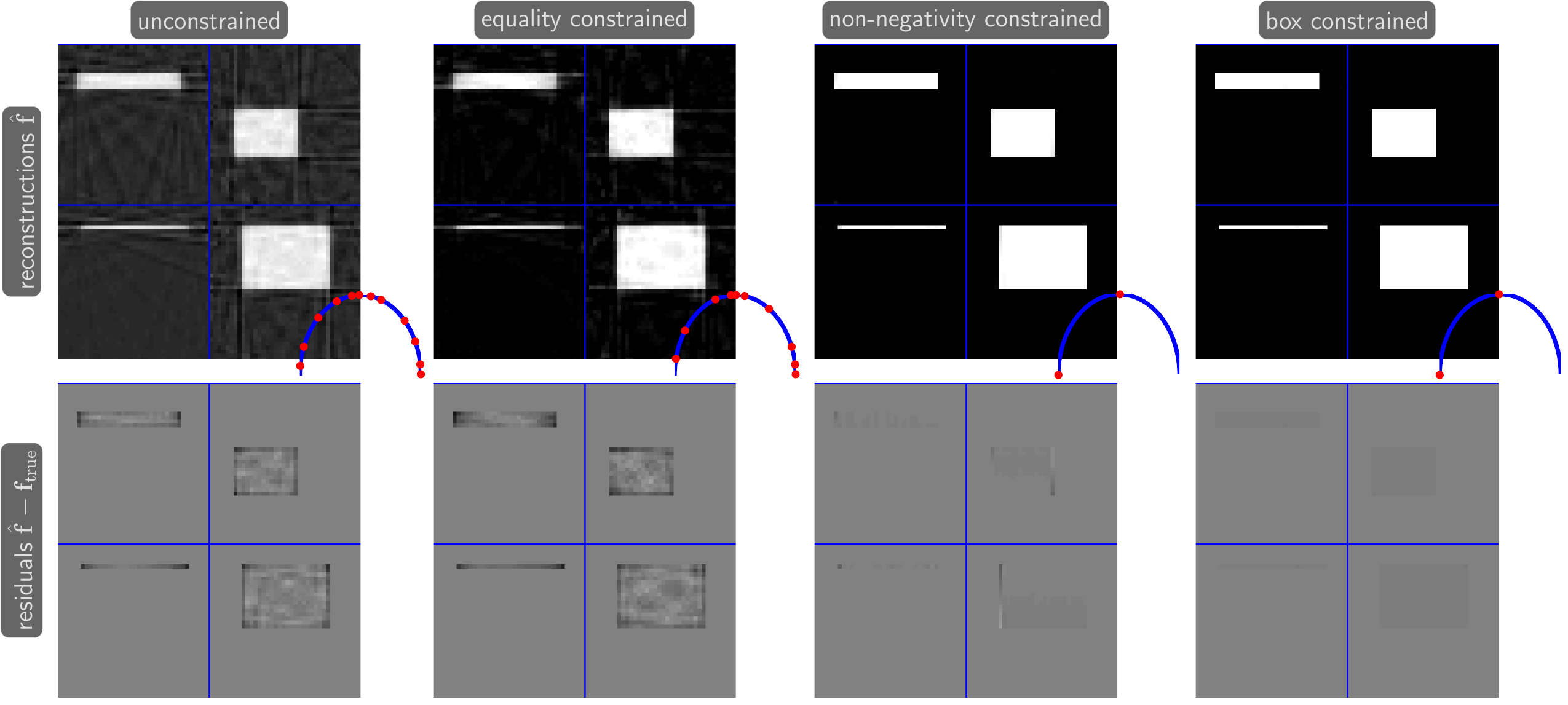}\\
			\includegraphics[width=.9\textwidth]{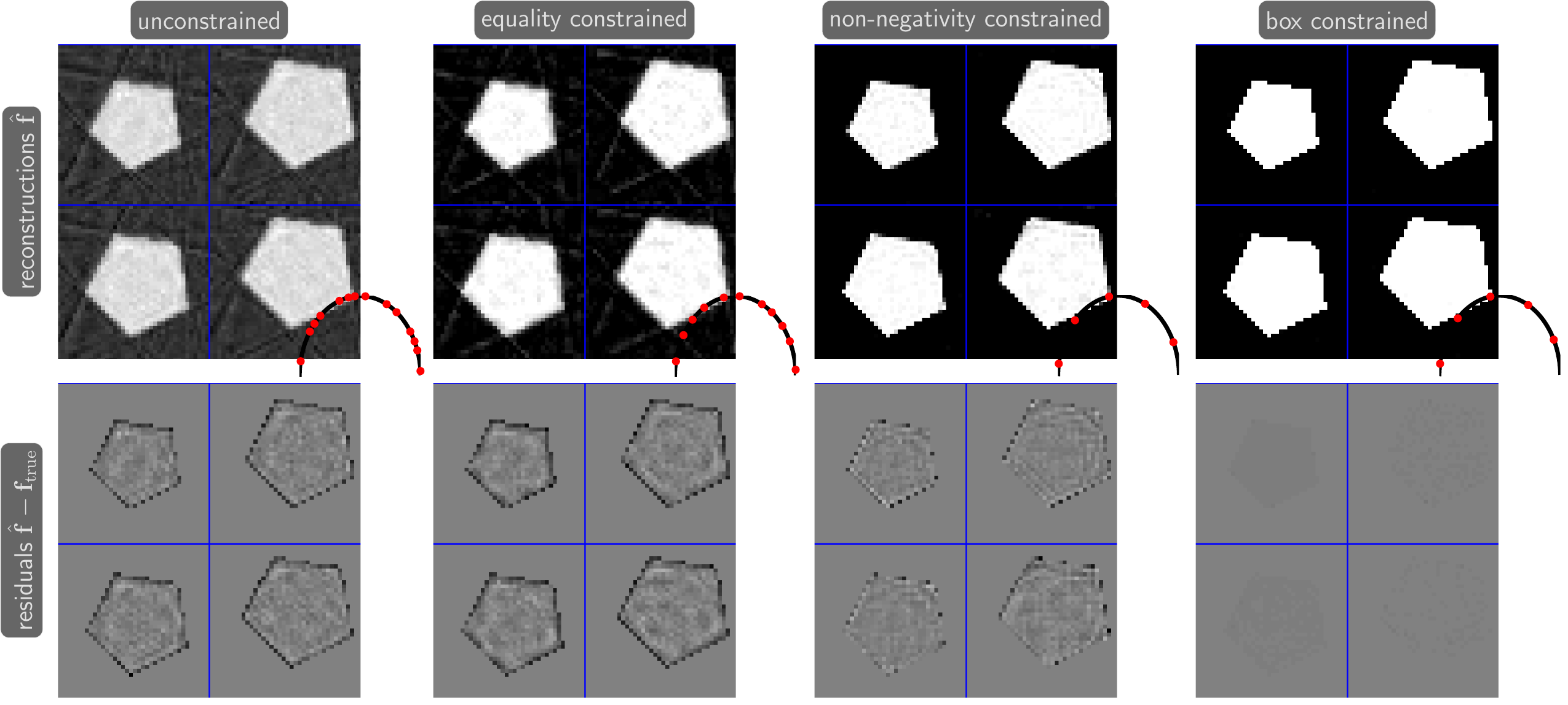}
		\end{center}
	\end{center}
	\caption{OED results for binary datasets (rectangles and pentagons respectively) comparing unconstrained, equality constrained, non-negativity constrained, and box constrained formulations of OED problem A.  Reconstructions and differences to the true training image for 4 randomly chosen images are provided, where the color axis is identical in each row to simplify comparison.  The red dots on the half circle correspond to the projection angles of the optimal designs. }
	\label{fig:OEDA_binary}
\end{figure}

\begin{figure}
	\begin{center}
		\begin{center}
			\includegraphics[width=.9\textwidth]{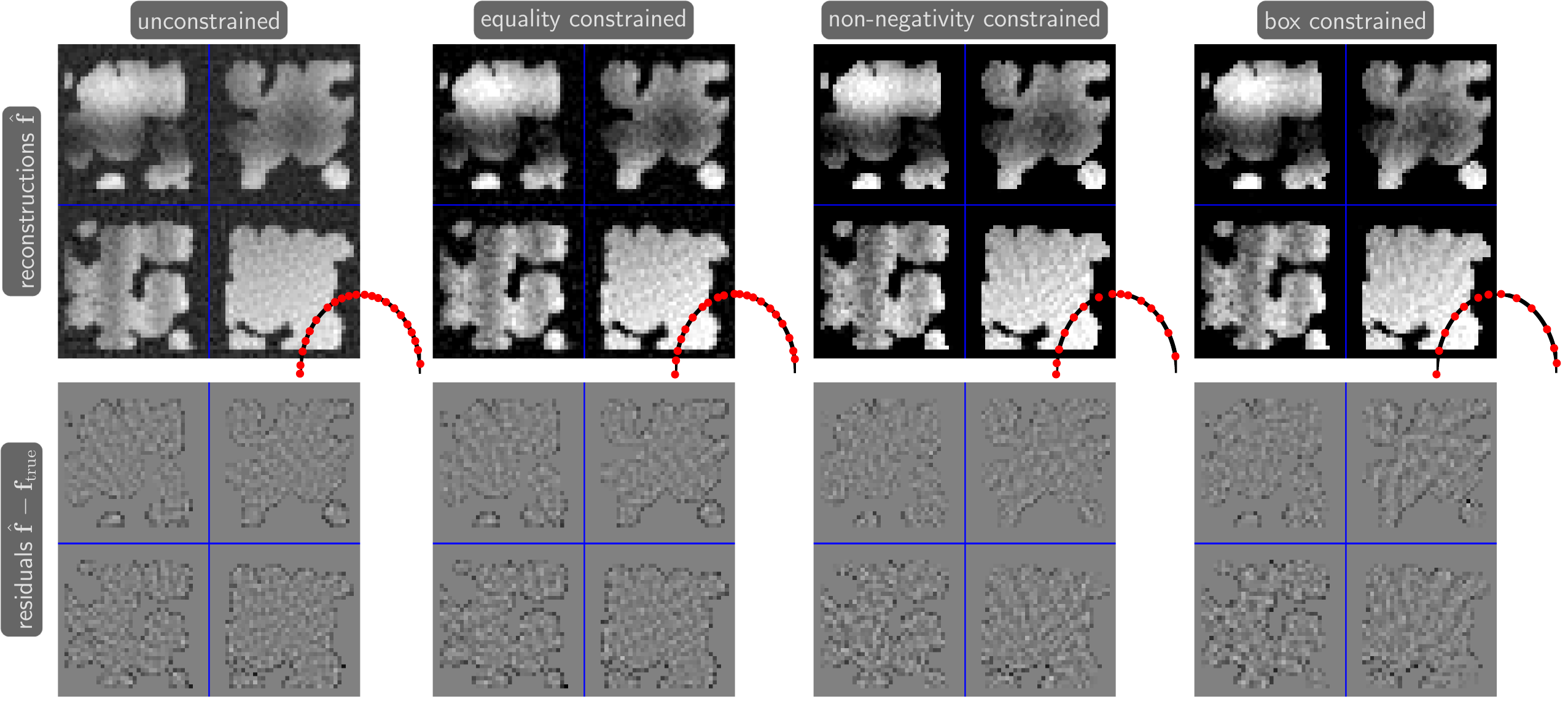}\\
			\includegraphics[width=.9\textwidth]{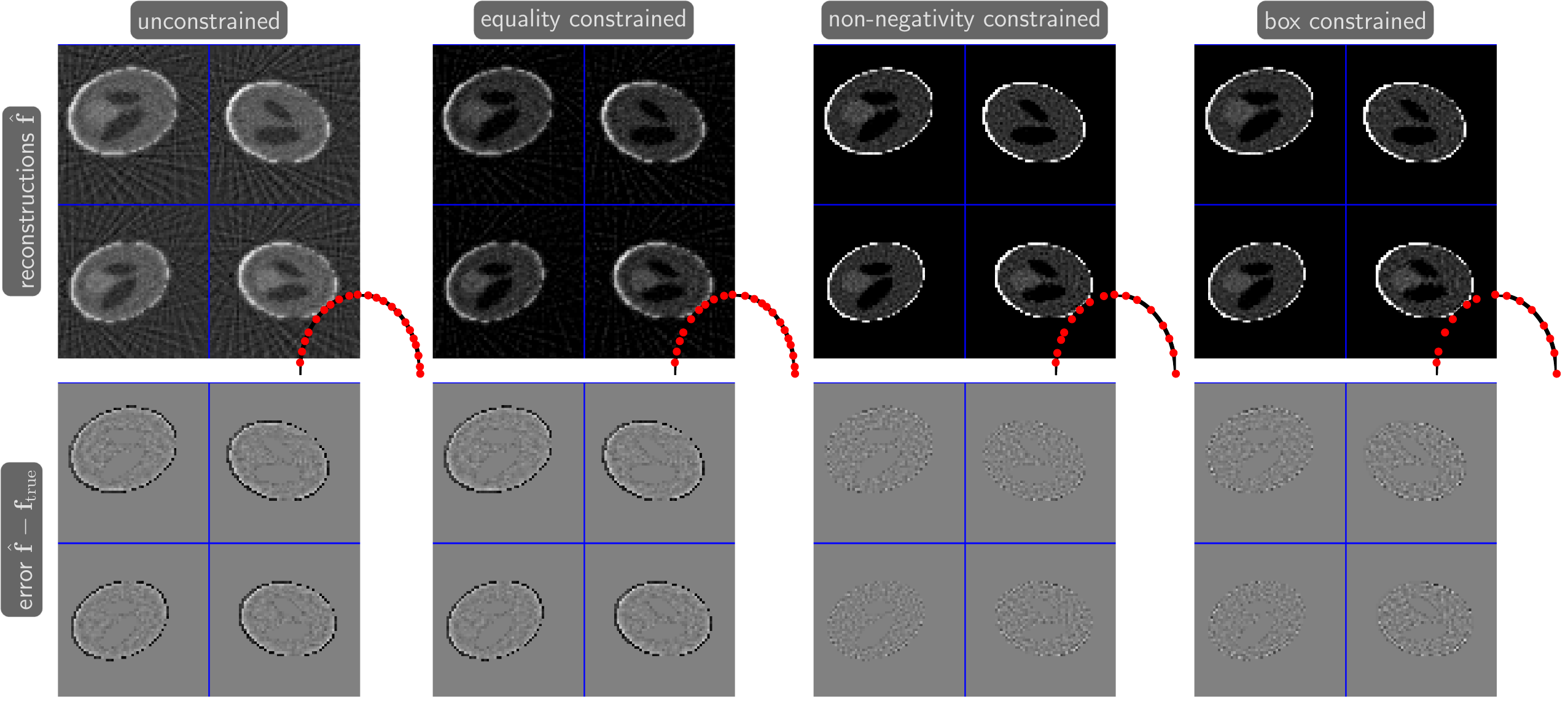}
		\end{center}
	\end{center}
	\caption{OED results for blobs and brain phantoms datasets comparing unconstrained, equality constrained, non-negativity constrained, and box constrained formulations of OED problem A.  Reconstructions and differences to the true training image for 4 randomly chosen images are provided, where the color axis is identical in each row to simplify comparison.  The red dots on the half circle correspond to the projection angles of the optimal designs. }
	\label{fig:OEDA_blobs}
\end{figure}

\section{Conclusions}
\label{sec:conclusions}
In this work, we consider optimal experimental design problems in the context of tomographic reconstruction and investigate the impact of state constraints on the optimal design.  We examine two problem formulations for enforcing sparse sampling in the design parameters, where OED problem A employs a sparsity enforcing regularization term and OED problem B achieves a desired level of sparsity by construction.  We investigate Bayes risk and empirical Bayes risk minimization techniques for OED.  For problems with known or well-approximated (e.g., from very large data sets) mean and covariance matrix, a reformulation of the Bayes risk can lead to efficient methods to obtain optimal designs for the unconstrained case.  However, the empirical risk minimization framework allows for incorporation of state constraints.  The primary challenge toward efficient optimization of the empirical problem is computing derivatives of the reconstructed image with respect to the design parameters.  We obtain these by using implicit differentiation of the KKT conditions within interior point methods and by exploiting parallel computing in Julia.  Our numerical results on various datasets demonstrate that including state constraints does indeed impact the optimal design, in that fewer projections are required, and smaller MSE values can be obtained. Some items for future work include considering integer or binary constraints on the design parameters, extensions to nonlinear inverse problems, and extensions to other applications.

\section*{Acknowledgments}
This work was initiated as a part of the SAMSI Program on Optimization 2016-2017.
The material was based upon work partially supported by the National Science Foundation under Grant DMS-1127914 to the Statistical and Applied Mathematical Sciences Institute. Any opinions, findings, and conclusions or recommendations expressed in this material are those of the author(s) and do not necessarily reflect the views of the National Science Foundation.
 \bibliographystyle{siamplain}

% \bibliography{OEDreferences}
\end{document}